\documentclass[12pt]{amsart}
\usepackage{amsmath,amsthm,amscd,amsfonts,amssymb}
\usepackage[all]{xy}


\begin{document}
   \baselineskip   .7cm
\newtheorem{defn}{Definition}[section]
\newtheorem{thm}{Theorem}
\newtheorem{propo}[defn]{Proposition}
\newtheorem{cor}[defn]{Corollary}
\newtheorem{lem}[defn]{Lemma}
\theoremstyle{remark}
\newtheorem{rem}{Remark}[section]
 \newtheorem{ex}[defn]{Example}
\renewcommand\o{{{\mathcal O}}}
\newcommand\s{\sigma}
\newcommand\w{\widehat}
\newcommand\cal{\mathcal}

\title [Annihilators for cusp forms of weight $2$ and level $4p^m$]
{Annihilators for cusp forms of weight $2$ and level $4p^m$}

\author[A. \'Alvarez]{A. \'Alvarez${}^*$     }
\address{Departamento de Matem\'aticas  \\
Universidad de Salamanca \\ Plaza de la Merced 1-4. Salamanca
(37008). Spain.}

\thanks{MSC: 11F11, 11F33, 11G20 \\$*$  Departamento de Matem\'aticas.
Universidad de Salamanca. Spain.}

\maketitle

  \begin{abstract} We obtain   operators,  given essentially by formal sums of Hecke operators,  that annihilate
  spaces of cusp forms of weight $2$ for $\Gamma_1( p^m)\cap
  \Gamma(4)$, whose dimensions will be specified.
  Moreover, we obtain      the principal part  ($\mathrm{mod} \, p$), over the
  cusps,
  of certain meromorphic modular functions of level $4p^m$.

  \end{abstract}

\tableofcontents


\section{Introduction}

{\bf{Previous notation:}}
In this this work $p$ is a prime integer $\neq 2,3$. We fix $\zeta_{4p^m}$,   a primitive $ {4p^m}$-root of unity.
 Let $\mathfrak p$ be a place   over $p$  in ${\Bbb Z} [\zeta_{4p^m}]$ and $R_{\mathfrak p}$  is the local ring ${\Bbb Z} [\zeta_{4p^m}]_{(\mathfrak p)}$; $k$ and $K$ are its residual field and fraction field, respectively.
The cardinality of $k$ is $p^\delta$, where $\delta=1$ when $4\vert
(p-1) $, and $\delta=2$ in  other cases.
If $X$ is a curve over $\mathrm{Spec}(R_{\mathfrak p})$, then
$ X_k$ denotes $X\otimes_{R_{\mathfrak p}}k$. Let us denote by
$\mathrm{Fr}$   the $p^\delta$-Frobenius morphism. We denote by $A^0_m$   the $p^m$-cyclic subgroup of $({\Bbb Z}/p^m)^{\oplus 2}$ generated by $(1,0)$ and  ${A^0_i}\subset A_m^0 $ its $p^i$-cyclic subgroup. Let $H/L$ be a Galois extension; the group $Aut_L(H)$ is called the Galois group of $H/L$.

\bigskip

Part of this  section is  extracted from \cite{DR}, \cite{L} and \cite{M}.
Let us denote by ${\mathfrak H}$  the upper plane of complex
numbers  with $\mathrm{Im}\, z>0$. Let us denote by ${\mathfrak H}^*$   the union
of the  upper plane with $\infty$ and ${\Bbb Q}$. The modular
congruence  subgroups $\Gamma(n)$  and $\Gamma_1(n)$ consist  of the elements  $\gamma \in Sl_2({\Bbb Z})$ such that
$$\gamma\equiv   \left(\begin{matrix} 1   & 0
  \\0 & 1
  \end{matrix}\right)\,\mathrm{mod} \, n \text{,
   }\gamma\equiv   \left(\begin{matrix} 1   & b
  \\0 & 1
  \end{matrix}\right)\,\mathrm{mod} \, n,$$
(where $b$ is arbitrary), respectively. The quotients  ${\mathfrak
H}^*/\Gamma(n)$ and ${\mathfrak H}^*/\Gamma_1(n)$ give   smooth,
compact modular curves over ${\Bbb C}$, usually denoted by $X(n)$
and $X_1(n)$, respectively. In the same way,  if one considers
commensurable subgroups  $\Gamma\subset Sl_2({\Bbb
 Z})$,   one obtains  modular curves $X_\Gamma$.

A  modular form of weight $2$  for $\Gamma(n)$,
 (respectively $\Gamma_1(n)$), is a  holomorphic function
 $g(z):{\mathfrak H}^*\to {\Bbb C} $
 such that $g(\frac{az+b}{cz+d})=(cz+d)^{ 2}g(z)$  for each $\gamma \in \Gamma(n)$, (respectively $ \Gamma_1(n)$), with
$$\gamma=   \left(\begin{matrix} a  & b
  \\c & d
  \end{matrix}\right). $$
  Therefore, if
  $g(z)$ is a modular form for $\Gamma(n)$,
 (respectively $\Gamma_1(n)$) then $g(z+n)=g(z)$, (respectively, $g(z+1)=g(z)$).
 If we denote $q:=e^{2\pi iz}$, then $g(z)$ has
  a $q$-expansion: $g(z)=\sum_{i=r}^\infty a_iq^{i/n}$, (respectively, $g(z)=\sum_{i=r}^\infty a_iq^{i }$), for some $r\in {\Bbb Z}$.

  The open subset    $X^0(n):={\mathfrak H}/\Gamma(n)$   of $X(n)$   has the
 following modu\-lar interpretation: its points are identified with isomorphism classes of elliptic
 curves  $E$  over ${\Bbb C} $, together with  an isomorphism of the
   $n$-torsion group of $E$ with $({\Bbb Z}/n)^{\oplus 2}$.  The points of the closed subset  $X(n)\setminus
 X^0(n)$, (respectively $X_1(n)\setminus
 X_1^0(n)$)
   are called  the cusps   of $X(n)$, (respectively $X_1(n)$).

A cusp form of weight $2$ for  $\Gamma_1(n)$
   is a modular form $g(z)$ for  $\Gamma_1(n)$
   of weight
$2$    that has a zero at each cusp point. Via the identification
$g(z)\to g(z)dz$,  the space of cusp forms for $\Gamma_1(n)$
  is isomorphic to
the space of $1$-holomorphic differentials on $X_1(n)$.

The meromorphic functions on $X(n)$ are called  meromorphic
modular functions of level $n$.

Let us set
$ R_i   \in \Gamma(1)/\Gamma_1(  p^m)$  as
$$R_i\equiv   \left(\begin{matrix} i   & 0
  \\0 & i^{-1}
  \end{matrix}\right) \mathrm{mod} \,\Gamma_1(  p^m) \text{ and } R_i\equiv   \mathrm{Id} \,\mathrm{mod}\,\Gamma (  4)$$
  with $1\leq i < p^m$ and $(i,p)=1$.
  We use the symbol $T_p$ for the  Hecke operator $T_p(\sum_{i=r}^\infty a_iq^{i } )=\sum_{i=r}^\infty a_{ip}q^{i } $ and $T_{p^h}=T_p^h$. Moreover, $R_i$ and $T_p$ operate   over the cusp forms of level $\Gamma_1( p^m)\cap \Gamma (4 )$.

The Deligne-Rapoport model $M(4p^m )$, (c.f. \cite{DR}
 IV, Corollary 3.9.2.) provides us with   proper models of $X(4p^m)$ over ${\Bbb Z}[1/4, \zeta_{p^m}]$.
  There exists an  open  subscheme $M^0(4p^m )\subset M (4p^m )$    with   $$M^0 (4p^m )\otimes_{{\Bbb Z}[1/4, \zeta_{p^m}]} {\Bbb C}= X^0(4p^m ),$$

   Moreover, there exits a moduli scheme    $M_1(4p^n )$   which is proper and flat over ${\Bbb Z}[1/4 ]$ and   is a model for $X(4p^m)/\Gamma_1(p^m)$, (c.f. \cite{DR}
 IV, Proposition 3.10).

 In this work,  we consider  the moduli  $M (4p^n )  $ and $M_1 (4p^m ) $ defined over the category of schemes over $R_{\mathfrak p}$.
We consider the open subscheme $M^0_1(4p^n )\subset  M_1(4p^n )$ such that $M^0_1(4p^n )\otimes_{R_{\mathfrak p}}{\Bbb C}=X_1^0(4p^m )$.

In this article for each $0\leq i \leq d-1$ we shall define a set $H_i$ and an operator $R_h$ for each $h\in H_i$ such that:

 The operator  $\sum _{i=0}^{d-1}
 (\sum_{h \in H_i}
R_{h }   \cdot T_{p^{\delta i}})$   annihilates  a space of cusp forms, of weight $2$ for the modular group  $\Gamma_1( p^m)\cap \Gamma ( 4)$, of dimension $N$, where $N=\mathrm{dim} (\mathrm{Pic}^0_{M_1(4p^m)_k/k})^{ab}$, (c.f. Theorem \ref{oper}). Let $G$ be an algebraic group, smooth and connected over $k$. There exists a  smallest linear algebraic subgroup $L\subset G$ with $G/L$ an abelian variety. $G^{ab}=:G/L$ is called the abelian part of $G$, c.f. \cite{Ro}.

In the second result of this article, we obtain the principal parts
of certain meromorphic modular functions of level $4p^m$  ($ \mathrm{mod}\,  p$) over  $M(4p^m)$. Let    $\mathrm{Spec}(B)=M^0(4p^m)$  and  let $\overline q^{1/4p^m}$  be the reduction  $ \mathrm{mod}\, \mathfrak p$ of the local parameter $q$ for the $\infty$.  We prove in  Theorem \ref{prin} that:

 For each $m\in {\Bbb N}$, there exists an element of $B_k[p(\sigma)^{-1}]$
  such that its principal is
 $\sum _{i=0}^{d-1}
 (\sum_{h \in H_i}
  \frac{1}{ ({\overline
q^{mp^{\delta i}} )^{R^{-1}_{h }}} }).$

 The function field
 of
${ M}_4[1/4]\otimes_{{\Bbb Z}[i]}{k}$ is $k(\sigma)$, (c.f \cite{H} 4.3), and $p(\sigma)\in k[\sigma]$ is   defined in section 4.1.

To prove the above two results, we use    the correspondences  found by
Anderson and Coleman, (c.f.\cite{An},\cite{C}),  which are  trivial
  (up to vertical and horizontal ones) and that are defined over
abelian extensions of   rational function fields.  These
correspondences are given by   Stickelberger's elements for
function fields and they provide proof  of the Brumer-Stark
theorem in the function field case.
To prove  the first result  of this article, we need    the Eichler-Shimura Theorem for $T_p$ over $M_1(4p^m)$.

The results of the first part  of this work is useful for obtaining,  in an algorithmic   way, the $p$-term in the Euler product of the Hasse-Weil $L$-function of $M_1(4p^m)$ (section 4.3), while the second result could be considered as an additive version of the arithmetic problem of explicitly obtaining  generators for principal ideals of global fields.

\section{Preliminaries}

\subsection{Models  for modular curves}\label{mo}

   We consider $M(4p^m )$  and $M_1(4p^m )$ defined over $R_{\mathfrak p}$. Note that $M(n)$ for  small   $n $ is not a scheme.
For the modular interpretation  of these models we use the Drinfeld level structures for elliptic curves, c.f. \cite{KM}.
\begin{defn}(c.f. \cite{KM}, chapter 3) Let $E$ be an elliptic curve over a scheme $S$. A $\Gamma(p^m)$ (respectively $\Gamma_1(p^m)$)-Drinfeld level structure  is an homomorphism of groups $\phi_{p^m}:({\Bbb Z}/p^m)^{\oplus 2}\to E_{p^m}(S) $, (respectively $\phi^1_{p^m}:{\Bbb Z}/p^m\to E_{p^m}(S) $), such that the divisor $\sum_{a\in ({\Bbb Z}/p^m)^{\oplus 2} } \phi_{p^m}(a)$, (respectively $\sum_{a\in {\Bbb Z}/p^m } \phi^1_{p^m}(a)$) gives a direct sum of two cyclic subgroup subschemes of $E$ of order $p^m$ (respectively a cyclic subgroup subscheme  of $E$ of order $p^m$). Note that in the case of $S=\mathrm{Spec}(k)$ with  $char(k)=p$, a  $p^m$-cyclic subgroup subscheme of $E_{p^m}$ could be
  $  \mathrm{Spec}(k[t]/t^{p^m})$  and hence the divisor given by $\phi^1_{p^m} $ could be  $p^m\cdot 0$. When $E_{n}$ is etale over $S$ and $\phi_{n}$ is a  usual $n$-level structure then  $\phi_{n}$   will be denoted by $\iota_{n}$.
\end{defn}

\begin{defn}(c.f. \cite{KM}, chapter 3) A  $\Gamma_1(p^m)$-level structure over an elliptic curve $E$ over $S$ is an object  $(E, C_{p^m}, \phi^1_{p^m} )$, with   $\phi^1_{p^m}$ a   $\Gamma_1 (p^m)$-Drinfeld level structure and $C_{p^m}=\phi^1_{p^m}({\Bbb Z}/p^m)$. The objects  $(E,    \phi_{p^m} )$ are said to be $\Gamma  (p^m)$-level structures.
\end{defn}

Bearing in mind that $M(4)$ is a fine moduli over $R_{\mathfrak p}$ for the $4$-level structures  (c.f. \cite{DR} IV, Corollaire 2.9), by \cite{KM} 5.5.1, and \cite{DR} IV, we have that
 $M^0(4p^m)$ (respectively  $M^0_1(4p^m)$) are schemes  and they have the following  modular  interpretation:
Let $S$ be a   scheme over $R_{\mathfrak p}$, $M^0(4p^m)(S) $ (respectively  $M^0_1(4p^m)(S) $)  are the sets    $$\{(E,\iota_4 , \phi_{p^m})\} \text{ (respectively  }\{(E,\iota_4, C_{p^m}, \phi^1_{p^m})\}),$$   where $E$ is an elliptic curve over $S$ and $\iota_4$ is a  usual $4$-level structure.

  Let ${A_m}$ be a cyclic subgroup of $({\Bbb
Z}/p^m)^{\oplus 2} $  of cardinality $p^m$. We define  the functor ${\mathcal C}^0_{A_m}(S)$  as the set of objects $(E,\iota_4,C_{p^m}, \phi_{p^m} )$
  such that   $\phi_{p^m}(A_m)=C_{p^m}$  is an etale $p^m$-cyclic subgroup of $E$ over the points of $S$ where $E$ is not supersingular. By \cite{DR} V, 4.8.
  ${\mathcal
C}^0_{A_m}\subset M^0(4p^m)$ gives a  curve   over $R_{\mathfrak p}$.

Let $\{A_m^i\}_{0\leq i\leq l-1  }$ be  the set of $p^m$-cyclic subgroups of $({\Bbb Z}/p^m)^{\oplus 2}$ and $l$  the cardinality of this set. According to    \cite{DR} V, Theorem 4.12,  the decomposition into the reduced
and irreducible components of the scheme $  M(4p^m )_k  $  takes the form
$\cup^{l-1}_{i=0}   ({\mathcal C}_{A^i_m})_k$.  The curves $({\mathcal C}_{A^i_m})_k$ are  smooth over $k$. We denote $ ({\mathcal C}^0_{A^i_m})_k=({\mathcal C}_{Aî_m})_k\cap M^0(4p^m )_k$.

\begin{rem}\label{psi}
Let $A_m^0   $  and ${A^0_i}\subset A_m^0 $ be as in the Introduction. We have an injective morphism  of functors $\Psi_i:({\mathcal C}^0_{A^0_i} )_k\hookrightarrow  M^0_1(4p^m)$.
Let us consider   an object  $(E,\iota_4 , \phi_{p^i} )\in ({\mathcal C}^0_{A^0_i} )_k$ and  the morphism of elliptic curves
$$h_i:E\to E/\phi_{p^i}(A^0_i)\overset{\mathrm{Fr}^{m-i}} \longrightarrow (\mathrm{Fr}^{m-i} )^*(E/\phi_{p^i}(A^0_i)),$$
 where, $\mathrm{Fr}$ denotes the $p$-Frobenius morphism. Thus, we define
 $$\Psi_i  (E,\iota_4  , \phi_{p^i})=(E, \iota_4, \mathrm{Ker }(h_i) , \phi^1_{p^m} ),$$  where  $\phi^1_{p^i}:{\Bbb Z}/p^m\simeq A_m^0 \to E_{p^m}(S)$
  with $\sum_{a=0}^{p^{m-i}}\phi^1_{p^m}(a)=p^{m-i}\cdot 0$ and  ${\phi^1_{p^m}}_{\vert {A^0_i}}={\phi_{p^i}}_{\vert {A^0_i}}$. We have that $\mathrm{Ker }(h_i)=\phi^1_{p^m}({\Bbb Z}/p^m)$.
\end{rem}

 Let $A^0_m,\cdots, A_m^m$ be   elements  for
 the equivalence classes of the quotient set for the action of $\Gamma_1(p^m)$ on   the set of $p^m$-cyclic subgroups  ${A^j_m}$  of $({\Bbb
Z}/p^m)^{\oplus 2} $, where  $A^0_i=A_m^0\cap A^{m-i}_m$.
The decomposition into reduced and irreducible components of $ {M^0_1(4p^m )}_k$ is $\cup^{m}_{i=0} \Psi_i(({\mathcal
C}^0_{A^0_i})_ k) $. This result is  deduced   from both \cite{DR} V, 4.8, 4.11 and 4.12,  and
  \cite{KM} 13.5.6.


\subsection{N\'eron models}
Let $R$ be a discrete valuation ring, $K$ its fraction field,  and $k$ the residual field, which we assume to be perfect. The N\'eron model  for an abelian variety $J_K$ defined over $K$  is a  group scheme ${\mathcal J}$  smooth over $R$,  such that ${\mathcal J}_K\simeq J_K$  and for each scheme ${\mathcal H}$ over $R$ the natural morphism
   $$(*)Hom_R({\mathcal H}, {\mathcal J})\to Hom_K({\mathcal H}_K, {\mathcal J}_K)$$
   is bijective.

   Let $X$ be a   flat, proper   curve over $R$ with $X_K$ smooth and geometrically irreducible over $K$.       We assume that $X_k$ has an irreducible component of  geometric multiplicity $1$.  In the above conditions, the identity component for
   the  N\'eron model ${\mathcal J}$ for $\mathrm{Pic}^0_{X_K/K}$  is $ \mathrm{Pic}^0_{\overline X/R}$, where $\overline X\to X$ is a desingularization for $X$. (C.f. \cite {BLR}, 9.5 Theorem 4 and 9.7 Theorem 1).

\begin{defn}Let  $G$, $H$ be two abelian group schemes  over $R$. $f:G\to H$ is said to be a quasi-isogeny if there exists a   homomorphism   $g:H\to G$, with $g\cdot f=[l\cdot]$, for some $l\in {\Bbb N}$.\end{defn}

   \begin{lem}Let ${\mathcal J}$, $ {\mathcal L}$ be the N\'eron models for the abelian varieties   $J_K$ and $ L_K$, respectively. Let $f:J_K\to L_K$ be an  isogeny and ${\mathfrak f}: {\mathcal J}\to {\mathcal L}$ the morphism associated with $f$ by the   bijection (*). We have that
   ${\mathfrak f}_k:{\mathcal J}_k\to {\mathcal L}_k$ is a  quasi-isogeny and ${\mathfrak f}^{ab}_k:{\mathcal J}^{ab}_k\to {\mathcal L}^{ab}_k$ is an isogeny. We denote by  ${\mathfrak f}_k^{ab}$   the homomorphism given by ${\mathfrak f}_k$ between the abelian parts of ${\mathcal J}_k$ and ${\mathcal L}_k$.
   \end{lem}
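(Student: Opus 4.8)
The plan is to reduce everything to the existence of a quasi-inverse isogeny together with the uniqueness clause in the Néron mapping property (*), and then to exploit the functoriality of the Chevalley decomposition of smooth connected groups over the perfect field $k$.

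First I would recall the standard fact that an isogeny $f:J_K\to L_K$ has a quasi-inverse: since $\ker f$ is a finite group scheme it is annihilated by some $n\in{\Bbb N}$, so $\ker f\subseteq\ker[n]$, whence $[n]_{J_K}$ factors as $g\circ f$ for a homomorphism $g:L_K\to J_K$; because $f$ is faithfully flat (an epimorphism) one gets $f\circ g=[n]_{L_K}$ as well, and $g$ is again an isogeny. Applying the bijection (*) with ${\mathcal H}={\mathcal L}$ produces a unique extension ${\mathfrak g}:{\mathcal L}\to{\mathcal J}$ of $g$. Now ${\mathfrak g}\circ{\mathfrak f}$ and $[n]_{\mathcal J}$ are two $R$-morphisms ${\mathcal J}\to{\mathcal J}$ with the same generic fiber $g\circ f=[n]$, so by the injectivity in (*) they coincide: ${\mathfrak g}\circ{\mathfrak f}=[n]_{\mathcal J}$, and symmetrically ${\mathfrak f}\circ{\mathfrak g}=[n]_{\mathcal L}$. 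Restricting to the special fiber yields ${\mathfrak g}_k\circ{\mathfrak f}_k=[n]_{{\mathcal J}_k}$, which is precisely the defining relation of a quasi-isogeny; this settles the first assertion.

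For the second assertion I would pass to identity components and then to abelian parts. As a homomorphism of $k$-group schemes, ${\mathfrak f}_k$ carries ${\mathcal J}_k^0$ into ${\mathcal L}_k^0$, both of which are smooth and connected; write ${\mathfrak f}_k^0$ for this restriction. Chevalley's theorem (valid since $k$ is perfect) gives exact sequences $0\to L_{\mathcal J}\to{\mathcal J}_k^0\to{\mathcal J}_k^{ab}\to 0$ and likewise for ${\mathcal L}$. This decomposition is functorial: the composite $L_{\mathcal J}\to{\mathcal J}_k^0\to{\mathcal L}_k^0\to{\mathcal L}_k^{ab}$ is a homomorphism from a connected linear (affine) group to an abelian variety, and its image is simultaneously affine and proper, hence trivial; therefore ${\mathfrak f}_k^0(L_{\mathcal J})\subseteq L_{\mathcal L}$ and ${\mathfrak f}_k$ induces a well-defined map ${\mathfrak f}_k^{ab}:{\mathcal J}_k^{ab}\to{\mathcal L}_k^{ab}$. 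Applying this functoriality to the two relations from the first part gives ${\mathfrak g}_k^{ab}\circ{\mathfrak f}_k^{ab}=[n]_{{\mathcal J}_k^{ab}}$ and ${\mathfrak f}_k^{ab}\circ{\mathfrak g}_k^{ab}=[n]_{{\mathcal L}_k^{ab}}$.

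Finally I would conclude using that ${\mathcal J}_k^{ab}$ and ${\mathcal L}_k^{ab}$ are abelian varieties, on which $[n]$ is an isogeny for $n\neq 0$ (and $n=\deg f\ge 1$). From ${\mathfrak g}_k^{ab}\circ{\mathfrak f}_k^{ab}=[n]$ one reads off $\ker{\mathfrak f}_k^{ab}\subseteq\ker[n]$, which is finite, so $\dim{\mathcal J}_k^{ab}\le\dim{\mathcal L}_k^{ab}$; the symmetric relation gives the reverse inequality, hence the two abelian varieties have equal dimension, and a homomorphism of abelian varieties of equal dimension with finite kernel is an isogeny. The main obstacle I anticipate is not any single deep input but the careful bookkeeping required to transport the identity $g\circ f=[n]$ from the generic to the special fiber and then through the abelian-part quotient; in particular one must justify the functoriality of Chevalley's decomposition and the vanishing of homomorphisms from affine groups to abelian varieties, which is exactly the point where the perfectness of $k$ enters.
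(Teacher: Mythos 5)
Your proposal is correct and takes essentially the same route as the paper: the paper's two-sentence proof asserts exactly that ${\mathfrak f}$ (hence ${\mathfrak f}_k$) is a quasi-isogeny and that quasi-isogenies between abelian varieties are isogenies, which is the skeleton of your argument. You have merely supplied the details the paper leaves implicit --- the quasi-inverse $g$ with $g\circ f=[n]$, its extension via the N\'eron property (*), the functoriality of the Chevalley decomposition over the perfect field $k$, and the dimension count showing a quasi-isogeny of abelian varieties is an isogeny.
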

   \begin{proof} Since $f$ is an isogeny  ${\mathfrak f} $ is a quasi-isogeny  and therefore  ${\mathfrak f}_k$ is also a quasi-isogeny. The remaining assertion is proved because over abelian varieties quasi-isogenies are isogenies.
   \end{proof}

\begin{lem}\label{ab}Let $g$ be an endomorphism of an abelian variety $J_K$.   Let $ {\mathfrak g}_k^{ab}: \mathcal J^{ab}_k\to \mathcal J^{ab}_k$ be the endomorphism given by ${\mathfrak g}_k$. If $\mathrm{dim}\,\mathrm{Ker }({\mathfrak g}_k^{ab})\geq r$ then $dim \mathrm{Ker } (   g  )\geq r$. Here, we assume that $char(K)=0$.
\end{lem}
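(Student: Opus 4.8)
The plan is to detect both kernel dimensions on an $\ell$-adic Tate module and to compare them through the monodromy filtration of $J_K$. Fix a prime $\ell\neq p$ and, using $\mathrm{char}(K)=0$, form $V_\ell J_K:=T_\ell J_K\otimes_{{\Bbb Z}_\ell}{\Bbb Q}_\ell$, where $T_\ell J_K$ is the $\ell$-adic Tate module; this is a ${\Bbb Q}_\ell$-vector space of dimension $2\dim J_K$ on which $g$ acts linearly. Writing $g=\iota\circ\pi$ with $\pi\colon J_K\to J_K/\mathrm{Ker}(g)^0$ the quotient isogeny and $\iota$ the finite-kernel map onto $\mathrm{Im}(g)$, exactness of $V_\ell$ on $0\to \mathrm{Ker}(g)^0\to J_K\to J_K/\mathrm{Ker}(g)^0\to 0$ gives $\ker(g\mid V_\ell J_K)=V_\ell\,\mathrm{Ker}(g)^0$, so that
\[
\dim_{{\Bbb Q}_\ell}\ker(g\mid V_\ell J_K)=2\dim \mathrm{Ker}(g).
\]
For an endomorphism $h$ of a finite-dimensional space I write $\nu(h):=\dim\ker(h)$; thus $2\dim\mathrm{Ker}(g)=\nu(g\mid V_\ell J_K)$, and the same isogeny factorisation over $k$ (with $\ell\neq p$) gives $2\dim\mathrm{Ker}(\mathfrak g^{ab}_k)=\nu(\mathfrak g^{ab}_k\mid V_\ell{\mathcal J}^{ab}_k)$.

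The one elementary ingredient I need is that nullity can only grow when passing from a $g$-stable subquotient to the whole space. For a $g$-stable $W\subseteq V$ the induced map $\bar g$ on $V/W$ has $\ker(\bar g)=g^{-1}(W)/W$, whence $\nu(\bar g\mid V/W)=\dim V-\dim(g(V)+W)\le \dim V-\dim g(V)=\nu(g\mid V)$, while $\nu(g\mid W)\le\nu(g\mid V)$ is clear since $\ker(g\mid W)\subseteq\ker(g\mid V)$. Applying these to a $g$-stable two-step filtration $W_0\subseteq W_1$ yields $\nu(g\mid W_1/W_0)\le \nu(g\mid W_1)\le \nu(g\mid V)$, so the nullity on a subquotient never exceeds the nullity on the ambient space.

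It remains to realise $V_\ell{\mathcal J}^{ab}_k$, with its endomorphism $\mathfrak g^{ab}_k$, as a $g$-equivariant subquotient of $V_\ell J_K$. This is the content of Grothendieck's theory of the $\ell$-adic monodromy of abelian varieties: after a finite separable base change on $K$ (which changes neither $\dim\mathrm{Ker}(g)$ nor the nullity of $\mathfrak g^{ab}_k$, the abelian rank and the module $\mathrm{gr}^W_1$ being intrinsic to $(V_\ell J_K,g)$) one may assume semistable reduction, and the monodromy operator then produces a filtration $0\subseteq W_0\subseteq W_1\subseteq W_2=V_\ell J_K$ whose graded pieces are, respectively, the toric part, the Tate module $V_\ell{\mathcal J}^{ab}_k$ of the abelian part of the special fibre, and the dual toric part. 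Since this filtration is canonical it is stable under every endomorphism of $J_K$ defined over $K$, and by the Néron functoriality (the bijection $(*)$) the endomorphism induced by $g$ on $\mathrm{gr}^W_1=W_1/W_0$ is precisely $\mathfrak g^{ab}_k$. Combining this identification with the two previous steps,
\[
2\dim\mathrm{Ker}(\mathfrak g^{ab}_k)=\nu\bigl(g\mid W_1/W_0\bigr)\le \nu\bigl(g\mid V_\ell J_K\bigr)=2\dim\mathrm{Ker}(g),
\]
so $\dim\mathrm{Ker}(g)\ge\dim\mathrm{Ker}(\mathfrak g^{ab}_k)\ge r$, as asserted.

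The main obstacle is exactly the equivariant identification $\mathrm{gr}^W_1\cong V_\ell{\mathcal J}^{ab}_k$ compatibly with $g$ and $\mathfrak g^{ab}_k$; everything else is formal. It is worth recording why a direct semicontinuity argument in the family ${\mathcal J}\to\mathrm{Spec}(R)$ does not suffice: upper semicontinuity of the fibre dimension of $\mathrm{Ker}(\mathfrak g)$ only yields $\dim\mathrm{Ker}(\mathfrak g_k)\ge\dim\mathrm{Ker}(g)$, the opposite of what is needed. It is precisely the passage to the abelian part—discarding the toric contributions, which may enlarge the special-fibre kernel—that restores the correct inequality, and this is what forces the use of the monodromy filtration rather than the whole special fibre.
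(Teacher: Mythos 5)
Your steps (1) and (2) --- computing the two kernel dimensions as nullities on $\ell$-adic Tate modules, and the observation that nullity on a $g$-stable subquotient never exceeds nullity on the ambient space --- are correct, and your overall strategy is genuinely different from the paper's. The gap sits exactly at the step you yourself call the main obstacle. To invoke the monodromy filtration you replace $K$ by a finite extension $K'$ over which $J_K$ has semistable reduction, and you justify this with the parenthetical claim that the base change ``changes neither $\dim\mathrm{Ker}(g)$ nor the nullity of ${\mathfrak g}^{ab}_k$, the abelian rank and $\mathrm{gr}^W_1$ being intrinsic to $(V_\ell J_K,g)$''. That claim is false. The abelian part of the special fibre of the N\'eron model is not determined by the pair $(V_\ell J_K,g)$: an elliptic curve with good reduction and one with additive but potentially good reduction, each equipped with $g=[2]$, have isomorphic pairs, yet abelian ranks $1$ and $0$. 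Nor is the abelian rank invariant under ramified base change: the N\'eron model itself changes, and the abelian rank can strictly increase (this is precisely what happens in the potentially good case). Consequently, after base change your displayed chain bounds the kernel dimension of the endomorphism induced on the abelian part of the special fibre of the \emph{new} N\'eron model over $R'$, not the quantity $\dim\mathrm{Ker}({\mathfrak g}^{ab}_k)$ in the statement. To close the argument you would need the monotonicity statement that the kernel dimension on the abelian part does not decrease under such base change; this is true, but it is itself a nontrivial fact about N\'eron models under ramified base change, and you neither state nor prove it.

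The gap is repairable, and in a way that makes the base change (hence the problematic claim) unnecessary: for any reduction type and any $\ell\neq p$, reduction of prime-to-$p$ torsion identifies the inertia invariants $(V_\ell J_{\bar K})^{I}$ with $V_\ell({\mathcal J}^0_{\bar k})$ functorially in $J_K$ (work over the strict henselization of $R$, which does not change the special fibre); the subspace $V_\ell L\subseteq V_\ell({\mathcal J}^0_{\bar k})$, where $L$ is the maximal connected linear subgroup, is canonical and hence $\mathrm{End}_K(J_K)$-stable; and the Chevalley sequence gives $(V_\ell J_{\bar K})^{I}/V_\ell L\simeq V_\ell({\mathcal J}^{ab}_{\bar k})$ equivariantly. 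Since $k$ is perfect, formation of the abelian part and kernel dimensions commute with $k\to\bar k$, so your linear-algebra inequality applied to the filtration $V_\ell L\subseteq (V_\ell J_{\bar K})^{I}\subseteq V_\ell J_{\bar K}$ finishes the proof with no semistable reduction and no monodromy operator. For comparison, the paper's own proof never leaves the category of N\'eron models: it takes N\'eron models ${\mathcal N}$, ${\mathcal I}$ of $\mathrm{Ker}(g)$ and $\mathrm{Im}(g)$, uses Poincar\'e reducibility ($J_K$ is isogenous to $\mathrm{Ker}(g)\times\mathrm{Im}(g)$) together with its preceding lemma to split ${\mathcal J}^{ab}_k$ up to isogeny as ${\mathcal N}^{ab}_k\times{\mathcal I}^{ab}_k$, and concludes that $\mathrm{Ker}({\mathfrak g}^{ab}_k)$ is isogenous to ${\mathcal N}^{ab}_k$, whose dimension is at most $\dim{\mathcal N}_k=\dim\mathrm{Ker}(g)$. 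Both arguments are paying for the same phenomenon --- the toric and unipotent directions that make naive semicontinuity go the wrong way --- but the paper's splitting trick handles it without Tate modules or inertia.
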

\begin{proof} By \cite{BLR} 7.1, Corollary 6, $\mathrm{Ker } ({ g} )$ and $ \mathrm{Im} (  g   )$ admit N\'eron models ${\mathcal N}$ and ${\mathcal I}$, respectively.
Because ${\mathcal N}$ is flat over $R$ we have
$ \mathrm{dim}\, \mathrm{Ker } (  g) =\mathrm{dim}\,{\mathcal N}_k$. Thus, to prove the Lemma it suffices to prove that $  {\mathcal N}^{ab}_k$ is  isogenic to $\mathrm{Ker }({\mathfrak g}_k^{ab})$.

We have the complex ${\mathcal N}\to {\mathcal J}\overset {\mathfrak g} \to {\mathcal I}.  $
Moreover, $J_K$ is   isogenous to $\mathrm{Ker } (  g  ) \times \mathrm{Im}\, ( g  )$ so by the above Lemma ${\mathcal J}$ and ${\mathcal J}_k$ are quasi-isogenous to
${\mathcal N}\times {\mathcal I} $  and ${\mathcal N}_k\times {\mathcal I}_k $, respectively and
${\mathcal J}^{ab}_k$ is isogenous to ${\mathcal N}^{ab}_k\times {\mathcal I}^{ab}_k $. Thus,  we deduce
an exact sequence,  up to isogenies
$$0\to {\mathcal N}^{ab}_k\to{\mathcal J}^{ab}_k\overset{{\mathfrak g}^{ab}_k}\to {\mathcal I}^{ab}_k\to 0$$
and therefore $\mathrm{Ker } ({\mathfrak g}_k^{ab})$ is isogenic to ${\mathcal N}^{ab}_k $.
\end{proof}

\section{The Eichler-Shimura Theorem}

\subsection{Convention for correspondences}\label{conv}
  Let us
consider a proper smooth and geometrically irreducible  curve   $Y$ over a field $k$.  A
correspondence  $C\subset
 Y\times Y$,    given by a Weil divisor,   defines an endomorphism, denoted by $\tilde C$, over the Jacobian  $J_Y$
 in the following way and convention: Let $R$ be an $k$-algebra  and $L(D)$
a line bundle associated  with a  locally principal
 ideal    $I_D\subseteq \o_{Y_R }$  (i.e: $I_D$ is given by an effective Cartier divisor
 $D$  on $Y_R$),   then, $\tilde C(L(D))$ is the line bundle on $Y_{R }$ with  the Cartier divisor
 given
 (locally over $Y_ R$)  by
 the line bundle obtained from the kernel ideal of the ring morphism:
 $$\phi: \o_{Y_R }\to \frac{  \o_{Y_R }  \otimes_R\o_{Y_R }}{I_D\otimes_R \o_{Y_R }
  +I_C},$$
 with  $\phi(a ):= 1\otimes a $, and $I_C$  is the ideal  associated with  $C$.
 We denote by $C(D)$ the Cartier divisor given by $ \mathrm{Ker} (\phi)$. By linear
 extension,
 one can obtain  $\tilde C(L(D))$ and  $C(D)$, with $D$  not
 effective.

Let $\tau:Y\to Y$ be  a scheme morphism. We denote  $\Gamma(\tau)=\{(x,\tau(x)):x\in Y\}$ and its transpose $^t\Gamma(\tau)=\{( \tau(x),x):x\in Y\}$.

\begin{lem}\label{Fr} Let   $k$ be a finite field of $q$ elements. If we denote by
 $D$ a  Cartier divisor  over $ Y_R  $, then $
 ^t{\Gamma(Fr)}(D) =(\mathrm{Fr}\times \mathrm{Id})^*D$ and $ {\Gamma(\mathrm{Fr})}(D) =(\mathrm{Id}\times \mathrm{Fr})^*D$. Here, $  \mathrm{Fr}$ denotes the $q$-Frobenius morphism and $\gamma^*D$ denotes
 the pullback of $D$ by  a morphism $\gamma$.
 \end{lem}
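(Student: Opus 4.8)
The plan is to unwind the defining convention of subsection \ref{conv} for the two graph correspondences and, in each case, to read off the two projections $p_1,p_2\colon Y_R\times_R Y_R\to Y_R$ restricted to the graph. Throughout I write $\tau=\mathrm{Fr}\times\mathrm{Id}$ for the $q$-Frobenius of $Y$ base-changed to $R$, so that on a local ring $\tau^{*}$ is the map $g\mapsto g^{(q)}$ obtained by raising the $Y$-coordinates to the $q$-th power. The key external fact I would record at the outset is that, over $k$, the absolute $q$-Frobenius $F$ of $Y_R$ factors as $F=(\mathrm{Fr}\times\mathrm{Id})\circ(\mathrm{Id}\times\mathrm{Fr})=(\mathrm{Id}\times\mathrm{Fr})\circ(\mathrm{Fr}\times\mathrm{Id})$ (the two partial Frobenii commute), with $F^{*}g=g^{q}$, i.e. $F^{*}D=qD$ on Cartier divisors, and that $\mathrm{Fr}\times\mathrm{Id}$ is finite flat of degree $q$.

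First I would treat $C={}^{t}\Gamma(\mathrm{Fr})=\{(\tau(x),x)\}$. Here the second projection restricts to an isomorphism of the graph onto $Y_R$ and the first projection restricts to $\tau$. Taking $I_D=(g)$ locally, I would compute the quotient ring appearing in the definition of $\phi$: modulo $I_C=(a\otimes 1-1\otimes\tau^{*}a)$ every element is congruent to a function in the second factor, and under this identification the image of $I_D\otimes_R\mathcal{O}_{Y_R}=(g\otimes 1)$ becomes $(\tau^{*}g)=(g^{(q)})$. Hence $\phi$ is the quotient map $\mathcal{O}_{Y_R}\to\mathcal{O}_{Y_R}/(g^{(q)})$ and $\mathrm{Ker}(\phi)=(g^{(q)})$, which is exactly the ideal cutting out $(\mathrm{Fr}\times\mathrm{Id})^{*}D$. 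This case is clean and yields the first identity directly, multiplicities included.

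Next I would treat $C=\Gamma(\mathrm{Fr})=\{(x,\tau(x))\}$, where now the first projection is the isomorphism and the second restricts to $\tau$. The analogous reduction modulo $I_C=(1\otimes a-\tau^{*}a\otimes 1)$ moves everything to the first factor and gives $\mathrm{Ker}(\phi)=\{a\in\mathcal{O}_{Y_R}:\tau^{*}a\in(g)\}$, so that $\Gamma(\mathrm{Fr})(D)$ is the pushforward $\tau_{*}D=(\mathrm{Fr}\times\mathrm{Id})_{*}D$. The substance of the lemma is then the identity $(\mathrm{Fr}\times\mathrm{Id})_{*}D=(\mathrm{Id}\times\mathrm{Fr})^{*}D$, which I would prove from the factorization of $F$: since $(\mathrm{Fr}\times\mathrm{Id})^{*}(\mathrm{Id}\times\mathrm{Fr})^{*}D=F^{*}D=qD$, applying $(\mathrm{Fr}\times\mathrm{Id})_{*}$ and using $(\mathrm{Fr}\times\mathrm{Id})_{*}(\mathrm{Fr}\times\mathrm{Id})^{*}=\deg(\mathrm{Fr}\times\mathrm{Id})=q$ gives $q\,(\mathrm{Id}\times\mathrm{Fr})^{*}D=q\,(\mathrm{Fr}\times\mathrm{Id})_{*}D$; as the group of divisors is torsion-free, the desired equality follows. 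At the level of ideals the same mechanism appears as the clean inclusion $\alpha(\beta(g)\,s)=g^{q}\alpha(s)\in(g)$, where $\alpha=(\mathrm{Fr}\times\mathrm{Id})^{*}$, $\beta=(\mathrm{Id}\times\mathrm{Fr})^{*}$.

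The main obstacle is precisely this second case. Because $\tau^{*}$ introduces a $q$-th power, the kernel-ideal prescription of subsection \ref{conv} and the cycle-theoretic pushforward $\tau_{*}D$ can disagree on non-reduced $D$ (e.g. $\{a:a^{q}\in(g)\}\neq(g)$ when $g$ has multiplicity $\geq 2$). I would therefore carry out the identification on prime divisors and invoke the linear extension built into the convention: localizing at a point $P$ of the smooth curve $Y_R$ with uniformizer $g$, the inclusion $\alpha(a)\in(g)$ gives, after applying $\beta$, that $a^{q}\in(\beta(g))$, and since $\beta(g)$ is again a uniformizer over the (perfect) base one concludes $a\in(\beta(g))$ by comparing valuations; extending linearly over prime divisors recovers $\Gamma(\mathrm{Fr})(D)=(\mathrm{Id}\times\mathrm{Fr})^{*}D$ for all $D$. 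The transpose case requires no such care, since there the relevant projection is already an isomorphism.
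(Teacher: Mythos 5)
Your proof is correct, and for the first identity it is essentially the paper's own argument: both of you transport the ideal $I_D$ across the graph ideal of $^t\Gamma(\mathrm{Fr})$, replacing $g\otimes 1$ by $1\otimes \tau^*g$ and reading off $\mathrm{Ker}(\phi)=(\tau^*g)$, which is the ideal of $(\mathrm{Fr}\times \mathrm{Id})^*D$. For the second identity your route is genuinely different in organization, though it runs on the same fuel, namely $(\mathrm{Fr}\times\mathrm{Fr})^*D=qD$ plus a degree-$q$ cancellation in the torsion-free divisor group. The paper never interprets $\Gamma(\mathrm{Fr})(D)$ as a pushforward: it applies $^t\Gamma(\mathrm{Fr})$ to the difference $\Gamma(\mathrm{Fr})(D)-(\mathrm{Id}\times\mathrm{Fr})^*D$, using the composition $^t\Gamma(\mathrm{Fr})*\Gamma(\mathrm{Fr})=q\Delta$ (in your notation $\tau^*\tau_{*}=q\cdot \mathrm{id}$, a fact special to the purely inseparable morphism $\tau$) together with the injectivity of $^t\Gamma(\mathrm{Fr})=\tau^*$ on Cartier divisors, which follows from the first part. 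You instead identify $\Gamma(\mathrm{Fr})(D)=\tau_{*}D$ and cancel with the projection formula $\tau_{*}\tau^{*}=\deg\tau=q$, which is valid for any finite flat morphism; the two cancellations are dual and of equal strength here, so neither buys more generality in this setting. What your write-up adds, and the paper passes over in silence, is the observation that the raw kernel-ideal prescription is not additive for $\Gamma(\mathrm{Fr})$ (your example $\{a:a^q\in(g)\}\neq(g)$ for a divisor with multiplicity is exactly right), so the action must be defined on prime divisors and extended linearly; this same point is in fact what legitimizes the paper's own use of composition and of linearity when it applies $^t\Gamma(\mathrm{Fr})$ to a difference of divisors. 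One caveat: your closing valuation argument takes the base to be a perfect field, whereas the convention of the paper allows arbitrary $k$-algebras $R$, and the lemma is later invoked with $R=k[\epsilon]$; over such $R$ the statement only has content through the same linear-extension convention, so this does not put your proof behind the paper's, but the parenthetical ``(perfect)'' should be made an explicit hypothesis of that step rather than left implicit.
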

 \begin{proof} Let $I\subset  \o_{Y_R }$  be an ideal   and $I_{\Delta}\subset  \o_{Y_R }\otimes_R
  \o_{Y_R }$ the diagonal ideal. The ideal $I_\Gamma$  given by  $
 ^t{\Gamma(\mathrm{Fr})}$ is $(\mathrm{Id}_Y\otimes \mathrm{Id}_R)\otimes (\mathrm{Fr}\otimes \mathrm{Id}_R))( I_{\Delta})$. Thus,
 $  I\otimes_R  \o_{Y_R } +I_\Gamma = (\mathrm{Fr}\otimes \mathrm{Id}_R)I  \otimes_R \o_{Y_R }+I_\Gamma.$
 Thus,  $^t{\Gamma(\mathrm{Fr})}(D) = (\mathrm{Fr}\times \mathrm{Id}_R)^*D$ for any Cartier divisor $D$.

To prove the remaining equality we consider
 the composition of the
correspondences $
 ^t\Gamma(\mathrm{Fr}) * {\Gamma(\mathrm{Fr})}=q\Delta$, where $\Delta\subset Y\times Y$  is
 the diagonal subscheme. Moreover,   $(\mathrm{Fr}\times \mathrm{Fr})^*D=D^q$  for each  Cartier divisor  $D$  on $Y_R $ and
  $  ^t{\Gamma(\mathrm{Fr})}[  {\Gamma(Fr)} (D)- (\mathrm{Id}\times \mathrm{Fr})^*D]=0$  and
we conclude, since    $  {\Gamma(\mathrm{Fr})}$ is injective acting on the
Cartier group of divisors of  $ {Y_R }$.
 \end{proof}

\subsection{The Eichler-Shimura Theorem for $T_p$}\label{sh}
We fix an isomorphism of groups  $p/p^{m+1}\simeq {\Bbb Z}/p^m$ and we follow the notation of section \ref{mo}.

  First, we shall define the Hecke correspondence  $T_p$ on  $M^0_1( 4p^n)$.

  Following \cite{R}, we consider the morphisms,
  $$\beta,   \alpha:M^0_1(4p^{m+1}) \to M^0_1(4 p^{m }) $$
   $$\beta(E,\iota_4,C_{p^{m+1}}, \phi^1_{p^{m+1}})=(E/C_p,\iota'_4, C_{p^{m+1}}/C_p,  \overline \phi^1_{p^{m+1}}  ),$$
   $$\alpha(E,\iota_4 ,C_{p^{m+1}}, \phi^1_{p^{m+1}} )=(E ,\iota_4, C_{p^{m } } ,  {\phi^1_{p^{m+1}}}_{\vert p/p^{m+1}}),   $$

   $\iota'_4$ is deduced from the isomorphism  $ E_4\simeq E_4/C_p$,
   $\overline \phi^1_{p^{m+1}}$ is $\phi^1_{p^{m+1 }}$ $ \mathrm{mod} \, p^m$ and $C_{p^{i}}=\phi^1_{p^{m+1}}(p^{m+1-i}/p^{m+1 })$.

  Because $M^0_1(4p^m )$ is an affine scheme it is a separated scheme over $R_{\mathfrak p}$. Moreover,
  $\alpha$ is a finite morphism, and therefore proper, because the finite morphism of forgetting the $p^m$-level structures,  $M ^0(4p^m )\to M ^0(4 )$, factorizes through $\alpha$. Thus, we have the closed subscheme   $$\Gamma^0_p=\{ (\beta(x),\alpha(x))\text{ with } x\in M ^0_1(4p^{m+1} )\}\subset M^0_1(4p^m )\times_{R_{\mathfrak p}} M^0_1(4p^m ).$$
Let $S$ be an $  R_{\mathfrak p} $-scheme. Then, $\Gamma^0_p$ has the following modular interpretation
$$ \Gamma^0_p(S)=\{ (E,\iota_4 ,C_{p^{m }}, \phi^1_{p^{m }}  ),  (\overline E,\overline \iota_4 ,\overline C_{p^{m }}, \overline \phi^1_{p^{m }} )  \}$$
such that there exists an isogeny $\psi:E \to \overline E$ of degree $p$ with $ \overline \iota_4=\psi\cdot   \iota_4$,
$ \overline \phi^1_{p^{m }}=\psi\cdot   \phi^1_{p^{m }}$ and $\psi( C_{p^{m }})=\overline C_{p^{m }}$.

According to \cite{DR}
 IV, Proposition 3.10,  we consider $M_1(4p^m):=(M(4p^m)/G)_{ R_{\mathfrak p}}  $, which is a  proper and flat  scheme   over $ R_{\mathfrak p}$,   where  $$G:=\{\gamma \in Gl_2({\Bbb Z})\text{ such that }
 \gamma\equiv   \left(\begin{matrix} 1   & b
  \\0 & d
  \end{matrix}\right)\,\mathrm{mod} \, p^m\}.$$
 This scheme  is a compactification   of $M^0_1(4p^m)$.

  Now, let $f:\overline{M}\to M_1(4p^m)$ be a desingularization of  $M_1(4p^m)$.
  By \cite{KM}, Theorem 10.12.2,     $\overline{M}$  can be obtained from the  $\Gamma_1(p^m)$-balanced moduli problem over $M(4)$. Moreover, for each automorphism $R_j$, there exists an automorphism  $R_j:\overline{M}\to \overline{M}$, (which we also denote  by $R_j$), such that $R_j\cdot f=f\cdot R_j$.

  The   schematic closure of $\Gamma^0 _p\subset  M_1(4p^{m })\times_{R_{\mathfrak p}} M_1(4p^{m })$ will be denoted by $\Gamma_p$ and  $\overline \Gamma_p:=(f\times f)^{-1}( \Gamma_p)$.

\begin{lem}   $\overline \Gamma_p$  provides
an   endomorphism, which we denote  by $T_p$, of the scheme $\mathrm{Pic}^0_{\overline M/R_{\mathfrak p}}$.\end{lem}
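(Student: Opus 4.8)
The plan is to read the statement through the correspondence convention of \S\ref{conv}, applied relatively over $R_{\mathfrak p}$, and to control the passage to the special fibre by means of the N\'eron machinery of the subsection on N\'eron models. Concretely, for every $R_{\mathfrak p}$-algebra $A$ the kernel-ideal recipe of \S\ref{conv} assigns to a relative Cartier divisor $D$ on $\overline M_A$ a divisor $\overline\Gamma_p(D)$, and by the linear extension recorded there this assignment is additive in $D$. Since the construction is functorial in $A$, it defines a natural transformation of the functor represented by $\mathrm{Pic}_{\overline M/R_{\mathfrak p}}$, hence a morphism of schemes $T_p\colon \mathrm{Pic}_{\overline M/R_{\mathfrak p}}\to \mathrm{Pic}_{\overline M/R_{\mathfrak p}}$. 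It then remains to verify that the recipe is legitimate and that $T_p$ carries the identity component into itself.

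First I would check that the two projections $\pi_1,\pi_2\colon \overline\Gamma_p\to \overline M$ are finite, so that the kernel ideal defining $\overline\Gamma_p(D)$ is locally principal and the output is again a relative Cartier divisor. On the open part this is inherited from \S\ref{sh}: the second projection is $\alpha$, which is finite there, and the first is $\beta$, finite by the same modular description (a fibre of either projection is a finite set of degree-$p$ isogenies compatible with the level data). Passing to the schematic closure $\Gamma_p$ inside the proper scheme $M_1(4p^m)\times_{R_{\mathfrak p}}M_1(4p^m)$ keeps the projections proper and quasi-finite, hence finite; and since $f\times f$ is proper and an isomorphism over the regular locus, the projections of $\overline\Gamma_p=(f\times f)^{-1}(\Gamma_p)$ remain proper and generically finite.

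Next I would identify the endomorphism on the generic fibre and use it to pin down the integral one. Over $K$ the curve $\overline M_K$ is the smooth projective model of $X(4p^m)/\Gamma_1(p^m)$, and $\overline\Gamma_p$ restricts to the classical correspondence $\Gamma^0_p$; by \S\ref{conv} this yields the usual Hecke endomorphism of $J_K=\mathrm{Pic}^0_{\overline M_K/K}$, which preserves degree zero. As recalled in the subsection on N\'eron models, and since $\overline M_k$ has a smooth (hence geometric-multiplicity-one) component by \S\ref{mo}, the scheme $\mathrm{Pic}^0_{\overline M/R_{\mathfrak p}}$ is the identity component of the N\'eron model $\mathcal J$ of $J_K$. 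By the universal property $(*)$ the generic endomorphism extends uniquely to $\mathfrak g\colon \mathcal J\to\mathcal J$, and any homomorphism of smooth group schemes sends the identity component into the identity component. Matching $\mathfrak g$ with the correspondence action of $T_p$, which agree on the schematically dense generic fibre and hence coincide, shows that $T_p$ stabilizes $\mathrm{Pic}^0_{\overline M/R_{\mathfrak p}}$ and is the desired endomorphism.

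The main obstacle I anticipate is the behaviour of $\overline\Gamma_p=(f\times f)^{-1}(\Gamma_p)$ over the special fibre. Since $f$ is a resolution and therefore not flat, this scheme-theoretic preimage may acquire spurious or embedded components supported on the exceptional locus, so it need not be a relative Cartier divisor there, and a naive application of the kernel-ideal recipe could fail to land in the degree-zero part. The clean way around this, which I would adopt, is precisely the two-step argument above: define $T_p$ by the correspondence on the generic fibre, where everything is smooth, and transport it to all of $\mathrm{Pic}^0_{\overline M/R_{\mathfrak p}}$ through the N\'eron universal property, rather than trying to make sense of $\overline\Gamma_p$ as an honest divisor on the singular special fibre.
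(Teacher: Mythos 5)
Your final construction --- take the classical Hecke endomorphism on the generic fibre and extend it to the identity component of the N\'eron model via the universal property $(*)$ --- certainly produces \emph{an} endomorphism of $\mathrm{Pic}^0_{\overline M/R_{\mathfrak p}}$, but it does not prove the lemma in the sense the paper needs, and it contains a circular step. The content of the lemma is that the correspondence $\overline\Gamma_p$ itself computes the endomorphism over all of $\mathrm{Spec}(R_{\mathfrak p})$, so that in particular its restriction to the special fibre is the correspondence action of $(\overline\Gamma_p)_k$; this is exactly what feeds the Eichler--Shimura computation (Lemma \ref{es}) and the proof of Theorem \ref{oper}, where the mod-$p$ fibre of $T_p$ must be matched with Frobenius and then with the Anderson--Coleman correspondence $D_k$. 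In your construction the only role of $\overline\Gamma_p$ is through its generic fibre, so you obtain no information about the special fibre of your $T_p$. The sentence ``matching $\mathfrak g$ with the correspondence action of $T_p$, which agree on the schematically dense generic fibre and hence coincide'' is where this is hidden: it presupposes that the correspondence action is defined integrally, which is precisely what your last paragraph declines to do. Note also that with your definition the subsequent Lemma \ref{T} becomes a tautology, while its actual content (N\'eron extension $=$ correspondence action on the identity component) is never established anywhere; the gap is displaced, not closed.

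The paper's proof confronts the obstacle you identified (non-flatness of $f$, possible spurious components of $(f\times f)^{-1}(\Gamma_p)$) directly, and the idea your proposal is missing is regularity of $\overline M$. Since $\mathrm{Pic}^0_{\overline M/R_{\mathfrak p}}$ is smooth over $R_{\mathfrak p}$ (by \cite{KM} 13.5.6 together with \cite{BLR} 9.7, Theorem 1), it suffices to define $(T_p)_W$ functorially on $W$-points for smooth $W\to \mathrm{Spec}(R_{\mathfrak p})$. For such $W$ one pulls back a line bundle $L$ along the first projection to $(\overline\Gamma_p)_W$, takes the associated Weil divisor $Z$, and pushes it forward as a cycle along the proper second projection: components on which the projection drops dimension contribute zero, so no finiteness, flatness, or Cartier property of $\overline\Gamma_p$ is required --- the embedded or exceptional components you worried about are harmless at the level of cycles. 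The result is a Weil divisor on $\overline M_W$, and because $\overline M_W$ is regular (it is smooth over the regular scheme $\overline M$), Weil divisors there are Cartier, hence give a line bundle; this is $(T_p)_W(L)$. That construction is what makes the statement ``$\overline\Gamma_p$ provides an endomorphism of $\mathrm{Pic}^0_{\overline M/R_{\mathfrak p}}$'' true integrally, and it is the reason the lemma is formulated on the desingularization $\overline M$ rather than on $M_1(4p^m)$ itself.
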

\begin{proof}According to \cite{KM}, 13.5.6,  we are in the conditions of \cite{BLR}, Theorem 1, 9.7 , and hence
$\mathrm{Pic}^0_{\overline M/R_{\mathfrak p}}$ is a smooth scheme over $ \mathrm{Spec}(R_{\mathfrak p})$. It is the identity component of the  N\'eron model for $\mathrm{Pic}^0_{M_1(4p^m)_K/K}=\mathrm{Pic}^0_{\overline M_K/K}$.

To find the endomorphism $T_p$  it suffices to give for each smooth morphism
     $W\to \mathrm{Spec}(R_{\mathfrak p})$  an   endomorphism
   $$(T_p)_W: \mathrm{Pic}^0 _{\overline M/R_{\mathfrak p}}(W)\to \mathrm{Pic}^0_{\overline M/R_{\mathfrak p}}(W).$$

   Let $\pi_i$, $i=1,2$, be the natural projections $\pi_i:\overline M_W\times_W \overline M_W\to \overline M_W$, and $ \overline \pi_i   $ its restrictions to $(\overline \Gamma_p)_W$. Following the convention fixed in the previous section, let $L$ be a line bundle over $\overline M_W $,    $\overline \pi_1^*(L)$ is a line bundle on  $(\overline\Gamma_p)_W$ with Weil divisor $Z$. Therefore,  the pushforward $(\overline \pi_2)_*(Z)$ is a Weil divisor on the regular scheme $\overline M_W $. We define  $(T_p)_W(L)$ as the line bundle on $\overline M_W$ associated with this Weil divisor. Note that $\overline M_W$ is a regular scheme because $\overline M_W\to \overline M$ is smooth and $\overline M$ is regular.
\end{proof}

 For a  morphism  $R_{\mathfrak p}\to {\Bbb C}$,  one can write  down  $T_p$ explicitly in terms of the modular parameter $\tau$.  Let $E_\tau$ be the elliptic curve given by the torus obtained from the lattice $1\cdot {\Bbb Z}+\tau \cdot{\Bbb Z}$, $\tau \in {\Bbb C}$. Let $(E,C_{p^m},\phi_{p^m} )$  be given by the pair $(E_\tau, \frac{   p^{-m}\Bbb Z } {\Bbb Z}, \phi^1_{p^m})$. Here,  $\phi^1_{p^m}$ is given by the natural isomorphism
   ${\Bbb Z}/p^m{\Bbb Z}\simeq p^{-m}{\Bbb Z}/{\Bbb Z} $.  We have:
   $$T_p (E,p^{-m}{\Bbb Z}/{\Bbb Z}, \phi^1_{p^m})=\sum_{i=0}^{p-1}(E_{\frac{i+\tau }{p}}, \frac{   p^{-m}\Bbb Z } {\Bbb Z} , \phi^1_{p^m}).$$

If $f(\tau)$ is a  cusp form of weight $2$  and level $p^m$, with $q$-expansion $\sum_{i=r}^\infty a_iq^i$, then $T_p(f(z))= \sum_{i=r}^\infty a_{ip}q^i$.

{\bf{Notation}} We denote $M:= M_1(4p^{m })$.
We consider    the desingularization   morphisms   $h:(\widetilde  {M_k})_{\mathrm{red}}  \to M_k$ and $\overline h: (\widetilde  {\overline M_k})_{\mathrm{red}}  \to \overline M_k$.

Let us consider $\widetilde f_k:\widetilde {(\overline M_k)}_{\mathrm{red}}\to (\widetilde M_k)_{\mathrm{red}}$, the mosphism induced by $f_k:\overline M_k\to  {  M_k}$. The morphism $\widetilde f_k$ is surjective because $f$ is surjective. Moreover,
we have  $h\cdot \widetilde f_k=f_k\cdot \overline h$.

We denote $(\widetilde {\Gamma}_p)_k :=
 (h\times h)^{-1}({\Gamma_p})_k  $  and $(\widetilde  {\overline \Gamma _p})_k:=(\overline h\times \overline h)^{-1}(   {  \overline \Gamma _p})_k$.

  Note that $ (\Gamma_p^0)_k=(\widetilde \Gamma_p)_k\cap (h\times h)^{-1}( M^0_k\times M^0_k)$.

We have  that $(\widetilde {M}_k)_{\mathrm{red}}=\sqcup_{i=0}^m \Psi_i(({\mathcal C}_{A^0_i})_k)$ and therefore
  $\mathrm{Pic}^0_{(\widetilde {M}_k)_{\mathrm{red}}/k}=\prod_{i=0}^m\mathrm{Pic}^0_{({\mathcal C}_{A^0_i})_k/k}.$ Moreover, $\mathrm{Pic}^0_{({\mathcal C}_{A^0_0})_k/k}=\{0\}$, c.f \cite{KM} 13.5.6.
Now, we prove      the Eichler-Shimura Theorem for $T_p$ and $M_1(4p^m)$.
\begin{lem}\label{es} We have that $ (\widetilde {\Gamma ^0_p})_k(x)= {\Gamma}(\mathrm{Fr})(x)$, where $x$ is a geometric point of $  \Psi_i(({\mathcal C}_{A^0_i})_k)$, ($i>0$),  given by $\Psi_i(  E,  \iota_4 ,     \phi_{p^{i }} )$ with $E$  an ordinary elliptic curve. Moreover, $(\widetilde  {\overline \Gamma _p})_k( (\widetilde{f_k})^{-1} (x))= (\widetilde{f_k})^{-1}({\Gamma}(\mathrm{Fr})(  (x)))$. \end{lem}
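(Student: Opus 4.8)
The plan is to prove the first identity on the dense ordinary locus of the component $\Psi_i(({\mathcal C}_{A^0_i})_k)$ and then transfer it to the desingularization. Both $(\widetilde{\Gamma^0_p})_k$ and $\Gamma(\mathrm{Fr})$ restrict to correspondences on the smooth curve $\Psi_i(({\mathcal C}_{A^0_i})_k)$, and since the ordinary points are dense in it, it suffices to match the two actions on such points and pass to the Zariski closure. Thus the whole content reduces to one modular computation: identifying, for $i>0$, the degree-$p$ isogeny selected by the Hecke correspondence at an ordinary point.

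First I would unwind the modular meaning of the fibre of $(\Gamma^0_p)_k$ over $x=\Psi_i(E,\iota_4,\phi_{p^i})$. By the interpretation recorded before the statement, a point $(x,\overline x)\in(\Gamma^0_p)_k$ is a degree-$p$ isogeny $\psi:E\to\overline E$ with $\overline\iota_4=\psi\iota_4$, $\overline\phi^1_{p^m}=\psi\phi^1_{p^m}$ and $\psi(C_{p^m})=\overline C_{p^m}$, the only constraint being that $\psi\phi^1_{p^m}$ remain a Drinfeld $\Gamma_1(p^m)$-generator; equivalently, $\mathrm{Ker}(\psi)$ must differ from the order-$p$ Drinfeld subgroup $C_p\subset C_{p^m}$ generated by $p^{m-1}\phi^1_{p^m}(1)$. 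Here I would feed in Remark~\ref{psi}: since $C_{p^m}=\mathrm{Ker}(h_i)$ is the étale subgroup $\phi_{p^i}(A^0_i)$ of order $p^i$ enlarged by the connected kernel of $\mathrm{Fr}^{m-i}$, and since $i>0$ forces $p^{m-1}$ to be a multiple of $p^{m-i}$, the element $p^{m-1}\phi^1_{p^m}(1)$ lies in the étale part, so $C_p$ is étale. For an ordinary $E$ the only order-$p$ subgroup schemes are the connected $\mathrm{Ker}(\mathrm{Fr})$ and the étale one, so excluding $C_p$ leaves exactly $\mathrm{Ker}(\psi)=\mathrm{Ker}(\mathrm{Fr})$, i.e. $\psi=\mathrm{Fr}$ up to isomorphism, $\overline E\cong E^{(p)}$, and the transported level data are the Frobenius twists of those of $x$. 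Hence the unique point of the fibre is $\overline x=\mathrm{Fr}(x)$, again on $\Psi_i(({\mathcal C}_{A^0_i})_k)$, as Frobenius preserves the combinatorial type of the component; the $p$ horizontal subgroups surviving from characteristic $0$ all specialise to the single infinitesimal $\mathrm{Ker}(\mathrm{Fr})$, so the inseparable degree $p$ of $\mathrm{Fr}$ accounts for the degree of the correspondence, yielding $(\widetilde{\Gamma^0_p})_k(x)=\Gamma(\mathrm{Fr})(x)$ with the conventions of section~\ref{conv}.

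For the second assertion I would use that the correspondences upstairs and downstairs are related by pullback. From $\overline\Gamma_p=(f\times f)^{-1}(\Gamma_p)$, the definitions $(\widetilde\Gamma_p)_k=(h\times h)^{-1}(\Gamma_p)_k$ and $(\widetilde{\overline\Gamma_p})_k=(\overline h\times\overline h)^{-1}(\overline\Gamma_p)_k$, together with $h\cdot\widetilde{f_k}=f_k\cdot\overline h$, one obtains
\begin{equation*}
(\widetilde{\overline\Gamma_p})_k=\big((f_k\overline h)\times(f_k\overline h)\big)^{-1}(\Gamma_p)_k=(\widetilde{f_k}\times\widetilde{f_k})^{-1}(\widetilde\Gamma_p)_k .
\end{equation*}
Thus $(\widetilde{\overline\Gamma_p})_k$ is the pullback of $(\widetilde{\Gamma^0_p})_k$ along $\widetilde{f_k}\times\widetilde{f_k}$ over the ordinary locus, so $(\widetilde{\overline\Gamma_p})_k(\widetilde{f_k}^{-1}(x))=\widetilde{f_k}^{-1}\big((\widetilde{\Gamma^0_p})_k(x)\big)=\widetilde{f_k}^{-1}(\Gamma(\mathrm{Fr})(x))$, which is the claim; the naturality of Frobenius (that $\widetilde{f_k}$ commutes with $\mathrm{Fr}$) ensures that the right-hand side is genuinely $\Gamma(\mathrm{Fr})$ applied upstairs.

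The main obstacle I expect is the scheme-theoretic heart of the modular computation: verifying carefully, in the Drinfeld/Katz--Mazur framework, that on the component $i>0$ the excluded subgroup $C_p$ is étale, and that, although $\mathrm{Ker}(\mathrm{Fr})$ is contained in the connected part of $C_{p^m}$ when $i<m$, the point $\mathrm{Fr}\,\phi^1_{p^m}(1)$ retains Drinfeld order $p^m$, the loss of geometric order being compensated by inseparability. Controlling this non-reduced structure and checking that the resulting $0$-cycle carries the correct multiplicity $p$, so as to match $\Gamma(\mathrm{Fr})$ on the nose rather than up to a factor, is the delicate point; the reduction to the ordinary locus and the desingularization transfer are then formal.
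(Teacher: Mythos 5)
Your proof is essentially the paper's own: the paper likewise unwinds the modular interpretation of $(\Gamma^0_p)_k$ as pairs related by a degree-$p$ isogeny $\psi$, uses Remark \ref{psi} to assert that $C_p$ is \'etale when $i>0$, argues that the compatibility $\psi(C_{p^m})=\overline C_{p^m}$ forces $\mathrm{Ker}(\psi)$ away from $C_p$ so that ordinarity leaves only $\mathrm{Ker}(\psi)=\mathrm{Ker}(\mathrm{Fr})$ and hence $\overline x=\mathrm{Fr}(x)$, and then deduces the second assertion formally from $(\widetilde{\overline\Gamma_p})_k=(\overline h\times\overline h)^{-1}(\overline\Gamma_p)_k$ together with $h\cdot\widetilde f_k=f_k\cdot\overline h$. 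Your extra framing (density of ordinary points, closure) and your multiplicity discussion go beyond what the paper records, and that last embellishment is stated backwards --- for an ordinary curve it is the canonical subgroup that specialises to $\mathrm{Ker}(\mathrm{Fr})$ while the $p$ non-canonical ones specialise to the \'etale subgroup --- but since the paper's proof does not address multiplicities at all, this does not affect the agreement of the core argument with the paper's.
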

  \begin{proof} Let us consider $\Psi_i(  E,  \iota_4 ,     \phi_{p^{i }} )=(  E,  \iota_4 ,  C_{p^{m }},   \phi^1_{p^{m }} )=x$. Since $i>0$ the subgroup $C_p\subset C_{p^{m }}$ is etale because $ \phi_{p^i}(A^0_i)$ is an etale subgroup of $C_{p^{m }}$ of cardinality $p^i$, (see Remark \ref{psi}).
Let us denote  $ (\widetilde {\Gamma ^0_p})_k(x)=\overline x$ with
 $\overline x=( \overline E,  \overline\iota_4 ,  \overline C_{p^{m }},   \overline\phi^1_{p^{m }} )$ and we have an isogeny of degree $p$, $\psi:E\to \overline E$, such that   $ \overline \iota_4=\psi\cdot   \iota_4$,
$ \overline \phi^1_{p^{m }}=\psi\cdot   \phi^1_{p^{m }}$ and $\psi( C_{p^{m }})=\overline C_{p^{m }}$. Since $\psi( C_{p^{m }})=\overline C_{p^{m }}$   we have $\mathrm{Ker}(\psi)\cap C_p=\{0\}$  and thus we have that $\psi$ is the $p$-Frobenius morphism $E\to \mathrm{Fr}^* E=\overline E$, (recall that $C_p$ is etale),    and therefore $\overline x=\mathrm{Fr}(x)$.
The remaining assertion is deduced bearing in mind the above result and
$(\widetilde  {\overline \Gamma _p})_k:=(  h\cdot \widetilde f_k\times h\cdot \widetilde f_k)^{-1}(   {   \Gamma _p})_k$. Recall that, $h\cdot \widetilde f_k=f_k\cdot \overline h$.

 \end{proof}

\begin{lem}\label{T} Let $(T_p)_K:\mathrm{Pic}^0_{M_K/K}\to \mathrm{Pic}^0_{M_K/K}$ be the endomorphism given by $T_p$ and ${\mathcal T}_p:{\mathcal J}\to {\mathcal J}$  the endomorphism given by  $(T_p)_K$ for the N\'eron model ${\mathcal J}$ of $\mathrm{Pic}^0_{M_K/K}$.  We have that  the endomorphism  ${\mathcal T}^0_p:{\mathcal J}^0 \to {\mathcal J}^0$ between the identity component of ${\mathcal J}$ is $T_p$. Recall that $M:=M_1(4p^m)$ and that $k$ is the function field of $R_{\mathfrak p}$.
\end{lem}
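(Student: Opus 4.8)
The plan is to compare the two endomorphisms of $\mathcal J^0$ by testing them on the generic fibre and then invoking the defining property $(*)$ of the N\'eron model. Recall from the discussion of N\'eron models that the identity component satisfies $\mathcal J^0 = \mathrm{Pic}^0_{\overline M/R_{\mathfrak p}}$, and that this is precisely the scheme on which the correspondence $\overline\Gamma_p$ defines the endomorphism we have named $T_p$. Both $T_p$ and $\mathcal T_p$ are homomorphisms of group schemes: $\mathcal T_p$ because it is obtained from the group homomorphism $(T_p)_K$ through the functorial bijection $(*)$, and $T_p$ because it arises from a correspondence via the Picard functor. Consequently each carries the identity component into itself; in particular $\mathcal T_p$ restricts to a well-defined endomorphism $\mathcal T_p^0$ of $\mathcal J^0$, and it suffices to prove $\mathcal T_p^0 = T_p$.

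First I would record that the two endomorphisms induce the same map on the generic fibre. Since the generic fibre of the N\'eron model is the abelian variety $\mathrm{Pic}^0_{M_K/K}$, which is connected, we have $\mathcal J_K = (\mathcal J^0)_K$, and by construction $(\mathcal T_p)_K = (T_p)_K$. For the directly defined $T_p$ one must check that its generic fibre is also $(T_p)_K$, which reduces to identifying $(\overline\Gamma_p)_K$ with the Hecke correspondence defining $(T_p)_K$. Here the desingularization $f$ is an isomorphism over the smooth locus and $M_K$ is smooth, so $f$ is an isomorphism on generic fibres; moreover $\Gamma_p$ is by definition the schematic closure of $\Gamma^0_p$, whose generic fibre is the Hecke correspondence on $M_K$. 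Hence $(\overline\Gamma_p)_K = (f\times f)_K^{-1}(\Gamma_p)_K$ is carried isomorphically onto $(\Gamma^0_p)_K$, and by the convention of Section \ref{conv} the induced endomorphism of $\mathrm{Pic}^0_{M_K/K}$ is exactly $(T_p)_K$.

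With this in hand I would apply the injectivity in $(*)$ with $\mathcal H = \mathcal J^0$, which is smooth over $R_{\mathfrak p}$ as the identity component of the smooth group scheme $\mathcal J$. Composing the directly defined $T_p$ with the immersion $\iota:\mathcal J^0\hookrightarrow\mathcal J$ gives $\iota\circ T_p \in \mathrm{Hom}_{R_{\mathfrak p}}(\mathcal J^0,\mathcal J)$, while $\mathcal T_p^0$ yields $\iota\circ\mathcal T_p^0$ in the same group; by the previous paragraph both restrict on the generic fibre to $(T_p)_K$. The injectivity asserted in $(*)$ then forces these two $R_{\mathfrak p}$-morphisms to coincide, and since $\iota$ is a monomorphism we conclude $T_p = \mathcal T_p^0$ as endomorphisms of $\mathcal J^0$.

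The main obstacle is the middle step: making precise that the correspondence $\overline\Gamma_p$, built on the desingularized integral model, restricts on the generic fibre to the classical Hecke correspondence, so that its Picard action agrees with $(T_p)_K$. Everything else is a formal consequence of the N\'eron mapping property once this identification is in place; the principal care needed is to track the schematic closure and the generic isomorphism $f_K$ compatibly through the product $\overline M_K\times_K\overline M_K$.
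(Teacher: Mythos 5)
Your proof is correct and takes essentially the same route as the paper: the paper's one-line argument consists precisely of the Néron mapping property $(*)$ applied with $\mathcal{H}=\mathrm{Pic}^0_{\overline M/R_{\mathfrak p}}=\mathcal{J}^0$ and with $\mathcal{H}=\mathcal{J}$, identifying both $Hom_{R_{\mathfrak p}}(\mathcal{J}^0,\mathcal{J})$ and $Hom_{R_{\mathfrak p}}(\mathcal{J},\mathcal{J})$ with $Hom_K(\mathrm{Pic}^0_{M_K/K},\mathrm{Pic}^0_{M_K/K})$, which is exactly your injectivity-on-generic-fibres argument. The extra verifications you include (that $\mathcal{T}_p$ is a group-scheme homomorphism preserving identity components, and that the generic fibre of the correspondence-defined $T_p$ agrees with $(T_p)_K$) are details the paper leaves implicit, and they are sound.
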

\begin{proof} This is deduced from the equalities
$$Hom_R(\mathrm{Pic}^0_{\overline M/R}, {\mathcal J})= Hom_K(\mathrm{Pic}^0_{M_K/K}, \mathrm{Pic}^0_{M_K/K})=Hom_R({\mathcal J}, {\mathcal J}). $$
 These equalities are obtained  because ${\mathcal J}$ is the N\'eron model for $ \mathrm{Pic}^0_{M_K/K}$.
\end{proof}



\section{Annihilators for cusp forms and a  calculation }

\subsection{Annihilators for cusp forms }\label{an}

An elliptic curve  over ${\Bbb F}_p$  in the Legendre form $y^2=x(x-1)(x-\lambda)$ is supersingular if and only if $H(\lambda)=0$, with $$H(\lambda)=(-1)^m\sum^m_{i=0}\left(\begin{matrix}    m
 \\ i  \end{matrix}\right)^2\lambda^i$$  the Deuring polynomial ($m=(p-1)/2$). For more details    see
\cite{H} 13.3.  The function field
 of
${ M}_4[1/4]\otimes_{{\Bbb Z}[i]}{k}$ is $k(\sigma)$, with
 $\lambda=(\sigma+1/2\sigma)^2  $. C.f \cite{H} 4.3.
We denote $p(\sigma):=\sigma^{p-1}
H((\sigma+1/2\sigma)^2) $. Here, $k$ denotes the residual field of $R_{\mathfrak p}$.

Let us consider  $q(\sigma)\in k[\sigma]$. We denote by $K_{q(\sigma)}$  the $q(\sigma){\mathfrak m}_\infty$-ray class field for $k(\sigma)$ that  is the maximal abelian extension, totally ramified, of $k(\sigma)$ and with ideal of ramification given by $q(\sigma){\mathfrak m}_\infty$. Here,  ${\mathfrak m}_\infty$ is the maximal ideal associated with   $\infty, ( 1/\sigma =0)$.

\begin{lem}
We have that the Galois group
of $K_{q(\sigma)}/k(\sigma)$ is isomorphic to  $  (k[\sigma]/q(\sigma))^\times $.
\end{lem}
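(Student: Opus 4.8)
The plan is to reduce the statement to a computation in the ray class group of the rational function field $F=k(\sigma)$ by global class field theory, and then to carry out that computation explicitly, exploiting the fact that $k[\sigma]$ is a principal ideal domain. By construction $K_{q(\sigma)}$ is the ray class field of $F$ attached to the modulus $\mathfrak m=q(\sigma)\mathfrak m_\infty$, so Artin reciprocity furnishes a canonical isomorphism $\mathrm{Gal}(K_{q(\sigma)}/k(\sigma))\simeq \mathrm{Cl}_{\mathfrak m}$, where $\mathrm{Cl}_{\mathfrak m}$ is the ray class group of $F$ modulo $\mathfrak m$. It therefore suffices to produce an isomorphism $\mathrm{Cl}_{\mathfrak m}\simeq (k[\sigma]/q(\sigma))^\times$.

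To compute $\mathrm{Cl}_{\mathfrak m}$, I would recall that the finite places of $F$ correspond to the monic irreducible polynomials of $k[\sigma]$, while the single infinite place $\infty$ (where $1/\sigma=0$) has residue field $k$ and uniformizer $1/\sigma$. Since $A:=k[\sigma]$ is a principal ideal domain, every fractional ideal prime to $q(\sigma)$ is principal and admits a unique monic generator. I would then define a homomorphism from the group $I_{\mathfrak m}$ of such ideals to $(k[\sigma]/q(\sigma))^\times$ by sending the class of a monic $g$ prime to $q(\sigma)$ to its residue $g\bmod q(\sigma)$. Surjectivity holds because each unit class modulo $q(\sigma)$ is represented by a monic polynomial prime to $q(\sigma)$, and the kernel is generated by the ideals $(g)$ with $g$ monic and $g\equiv 1\pmod{q(\sigma)}$, which is exactly the ray subgroup $P_{\mathfrak m}$ cut out by the modulus. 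This yields $\mathrm{Cl}_{\mathfrak m}\simeq (k[\sigma]/q(\sigma))^\times$.

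The step I expect to be the crux is the bookkeeping at the infinite place $\mathfrak m_\infty$. If $\mathfrak m_\infty$ were omitted, the reduction map would only realize the quotient $(k[\sigma]/q(\sigma))^\times/\mathrm{im}(k^\times)$, because the unit group $A^\times=k^\times$ would collapse the constant scalars; correspondingly, an unramified $\w{\Bbb Z}$ coming from constant field extensions would survive in the Galois group. Including $\mathfrak m_\infty$ imposes the congruence condition at $\infty$, whose residue field is $k$, and this condition is precisely that the chosen generator be monic (leading coefficient $1$). Pinning down the generator in this way removes the ambiguity by $k^\times$ and forces ramification at $\infty$, so that the constant field of $K_{q(\sigma)}$ remains $k$; this is the content of the ``totally ramified'' clause in the definition. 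It is exactly this condition that upgrades the quotient $(k[\sigma]/q(\sigma))^\times/k^\times$ to the full group $(k[\sigma]/q(\sigma))^\times$.

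Combining the two isomorphisms gives $\mathrm{Gal}(K_{q(\sigma)}/k(\sigma))\simeq \mathrm{Cl}_{\mathfrak m}\simeq (k[\sigma]/q(\sigma))^\times$, as claimed. I would note in passing that this is a special case of the explicit (Carlitz--Hayes) class field theory for the rational function field, the monic normalization at $\infty$ playing the role of the sign normalization; but for the purposes here the elementary ray class computation above is self-contained once reciprocity is invoked.
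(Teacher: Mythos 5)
Your route (reduce to class field theory, then compute explicitly using that $k[\sigma]$ is a PID) is in substance the same as the paper's, which carries out the computation with ideles rather than ideals, and your final isomorphism is correct. But the two steps you chain together are both false as stated, and they happen to cancel. Over a function field, Artin reciprocity does \emph{not} identify $\mathrm{Gal}(K_{q(\sigma)}/k(\sigma))$ with the ideal-theoretic ray class group $\mathrm{Cl}_{\mathfrak m}=I_{\mathfrak m}/P_{\mathfrak m}$: that group is infinite. Explicitly, sending $(g/h)$ (with $g,h$ monic, prime to $q(\sigma)$) to $\bigl(gh^{-1}\bmod q(\sigma),\,\deg g-\deg h\bigr)$ gives $\mathrm{Cl}_{\mathfrak m}\simeq (k[\sigma]/q(\sigma))^\times\times {\Bbb Z}$. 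The ${\Bbb Z}$ factor records the constant field extensions; these are unramified everywhere, hence admissible for \emph{every} modulus, so putting $\mathfrak m_\infty$ into the modulus does nothing to remove them, contrary to what you assert. What removes them is the ``totally ramified'' clause in the definition of $K_{q(\sigma)}$: class-field-theoretically it says that the Artin symbol of a uniformizer at $\infty$ is trivial, and it is exactly the factor $t_\infty^{\Bbb Z}$ in the denominator $t_\infty^{\Bbb Z}\cdot k(\sigma)^\times\cdot U(q(\sigma))$ of the paper's proof. Your argument has no counterpart of this quotient.

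The compensating error is your identification of the kernel. The ray condition at $\infty$ for a principal ideal $(f)$ is $v_\infty(f-1)\geq 1$; writing $f=g/h$ as a ratio of monics, this forces $\deg g=\deg h$ --- it is \emph{not} the condition that the generator be monic (a monic $g$ of positive degree has a pole at $\infty$, and so does every generator $cg$, so $(g)\notin P_{\mathfrak m}$ even when $g\equiv 1\bmod q(\sigma)$). Hence the kernel of your reduction map is not $P_{\mathfrak m}$ but the strictly larger subgroup generated by $P_{\mathfrak m}$ together with the class of $t_\infty$, which corresponds to $(\bar 1,1)$ under the isomorphism above. Because of this, your quotient $I_{\mathfrak m}/\ker$ \emph{is} the Galois group, but neither ``$\mathrm{Gal}\simeq\mathrm{Cl}_{\mathfrak m}$'' nor ``$\ker=P_{\mathfrak m}$'' is true, so the proof as written is invalid; the conflation of ``monic'' with ``$\equiv 1$ at $\infty$'' is precisely where the function-field subtlety gets lost. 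The repair is straightforward and turns your argument into a faithful ideal-theoretic translation of the paper's idelic one: keep your map $I_{\mathfrak m}\to (k[\sigma]/q(\sigma))^\times$, but identify $\mathrm{Gal}(K_{q(\sigma)}/k(\sigma))$ with $\mathrm{Cl}_{\mathfrak m}$ \emph{modulo the class of} $t_\infty$ (this is where the totally-ramified hypothesis genuinely enters), then check that this class is $(\bar 1,1)$, so that the quotient is $(k[\sigma]/q(\sigma))^\times$.
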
\begin{proof}
From class field theory,  the
abelian extension   $K_{q(\sigma)}/k(\sigma)$ has as Galois group
  $ I  /t_\infty^{\Bbb Z} \cdot {k(\sigma)}^\times \cdot U ({q(\sigma)}),$
  where $I  $ is the idele group of   $k(\sigma)  $, $U ({q(\sigma)} )$ denotes
  the idele subgroup   $\{\mu;  \mu\equiv 1\, mod\, q(\sigma){\mathfrak m}_\infty \}$ and $t_\infty$ is the idele whose entries are $\sigma^{-1}$ at $\infty$ and $1$ elsewhere.

Let $T$ be the zero locus of $q(\sigma){\mathfrak m}_\infty$. Let $I^T $ and  ${U}^T $  the ideles   of $I $
 and ${U}$  outside $T$,  respectively, and
$K^\times(q(\sigma)):={k(\sigma)}^\times \cap U ({q(\sigma)}). $
    We have an isomorphism of groups
$$\alpha: {I^T}/{   K^\times(q(\sigma))\cdot U^T}\to
 {I}/{t_\infty^{\Bbb Z}\cdot k(\sigma)^\times \cdot U(q(\sigma))},$$
where $\alpha(\mu_T)$ is the class of the idele in $ I$, whose
entries are given by $\mu_T$ outside $T  $ and $1$ over the places
of $T$. Moreover, we have an isomorphism
$$\delta: {I^T}/{  K^\times(q(\sigma))\cdot
U^T}\to  (k [\sigma]/p(\sigma))^\times $$
 defined as follows. Let  $\mathfrak h$ be a place on $k(\sigma)$ with $\mathfrak
 h\notin
T$ and    $h(\sigma) $   the irreducible polynomial associated
with $\mathfrak h$. Let $t_\mathfrak h\in I^T$ be   the idele whose entries are $h(\sigma)$ at $\mathfrak h$ and $1$ elsewhere. We define $\delta(t_\mathfrak h)$ as the class of
$h(\sigma)^{-1}$ within $(k
[\sigma]/p(\sigma))^\times $.
\end{proof}

We have that
$\Sigma$ is a totally ramified abelian extension of  $k(\sigma)$ whose ideal of ramification  divides $q(\sigma){\mathfrak m}_\infty$ if and only if $\Sigma$ is a  subextension of $K_{q(\sigma)}$ (see   {\cite{Ha}} Chapter 9).

Let $Y_{q(\sigma)}$ be the smooth, proper and geometrically irreducible
curve over $k$  with function field $K_{q(\sigma)}$. We have the following
Anderson-Coleman result, (c.f. \cite{An}, \cite{C}): there exists
a trivial correspondence, up to vertical and horizontal
correspondences  on $Y_{q(\sigma)}\times Y_{q(\sigma)}$
$$D_k:=\sum _{i=0}^{d-1}
 (  \underset { \mathrm{deg}(r(\sigma))=d-1- i}  {\sum_{r(\sigma)\text{(monic)} } }
\Gamma(\beta_{r(\sigma)}) * \Gamma(\mathrm{Fr}^{ i}))
,$$
with $\mathrm{deg}(q(\sigma))=d$,
  $\beta_{r(\sigma)}:=(\delta^{-1}\cdot \alpha)(r(\sigma))\in Aut_{k(\sigma)}(K_{q(\sigma)})$ and  $r(\sigma)$ are monic polynomials in $\sigma$.
 We   denote  the
composition of correspondences by $*$ and $\mathrm{Fr}$ denotes the $p^\delta$-Frobenius morphism.

\begin{lem} The   extension $\Sigma_{A^0_m}/k(\sigma)$,  given by
 $( {\mathcal C}_{A^0_m})_k\to
M(4 )_k$,   is an abelian extension of group
$({\Bbb Z}/p^m)^\times $. Moreover, $\Sigma_{A^0_m}$ is a
subextension of $K_{q(\sigma)}$, where  $q(\sigma)=p(\sigma)^rk[\sigma]$  for certain
$r\in{\Bbb N}$.
\end{lem}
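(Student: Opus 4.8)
The plan is to read off both assertions from the modular interpretation of $({\mathcal C}_{A^0_m})_k$ as an Igusa curve over $M(4)_k$, and then to feed the resulting ramification data into the class field theory dictionary established in the two preceding lemmas. First I would restrict the forgetful morphism $({\mathcal C}^0_{A^0_m})_k\to M^0(4)_k$ to the ordinary locus. By the modular description in Section \ref{mo}, a point of $({\mathcal C}^0_{A^0_m})_k$ is an ordinary $(E,\iota_4,\phi_{p^m})$ for which $\phi_{p^m}(A^0_m)$ is the \'etale $p^m$-cyclic subgroup; over a perfect base the canonical connected--\'etale splitting forces the remaining level data and leaves exactly a generator of the \'etale quotient $E_{p^m}^{et}\simeq{\Bbb Z}/p^m$. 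Thus the cover is the Igusa cover of level $p^m$, and $\mathrm{Aut}({\Bbb Z}/p^m)=({\Bbb Z}/p^m)^\times$ acts simply transitively on the fibres over the ordinary locus. Citing \cite{KM} (Chapter 12, and 13.5.6 already invoked above), this cover is finite \'etale and Galois over the ordinary locus with group $({\Bbb Z}/p^m)^\times$; passing to function fields gives that $\Sigma_{A^0_m}/k(\sigma)$ is abelian with group $({\Bbb Z}/p^m)^\times$, which is the first claim.

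For the second claim I would determine where $\Sigma_{A^0_m}/k(\sigma)$ ramifies. Since the Igusa cover is finite \'etale over the ordinary locus, the extension is unramified at every place of $k(\sigma)$ outside the supersingular points and the cusps. The finite supersingular points are exactly the zeros of $p(\sigma)=\sigma^{p-1}H((\sigma+1/2\sigma)^2)$, the factor $\sigma^{p-1}$ clearing the pole of $H(\lambda)$ coming from $\lambda=(\sigma+1/2\sigma)^2$ at $\sigma=0$; by Igusa's theorem the cover is totally (and wildly, through its $p$-part) ramified over each such point, with inertia the full group $({\Bbb Z}/p^m)^\times$. Hence $\Sigma_{A^0_m}/k(\sigma)$ is totally ramified, its ramification is supported on $V(p(\sigma))\cup\{\infty\}$, and at each supersingular point the higher ramification (conductor) exponent is bounded independently of the point; I would take $r$ to be this common bound, so that the ramification ideal divides $p(\sigma)^r{\mathfrak m}_\infty$.

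Finally I would invoke the criterion recorded just before the statement (from \cite{Ha}, Chapter 9): an abelian, totally ramified extension of $k(\sigma)$ whose ramification ideal divides $q(\sigma){\mathfrak m}_\infty$ is a subextension of $K_{q(\sigma)}$. Applying it with $q(\sigma)=p(\sigma)^r$ yields $\Sigma_{A^0_m}\subseteq K_{p(\sigma)^r}$, compatibly with the Galois group $(k[\sigma]/p(\sigma)^r)^\times$ computed in the first lemma of this subsection, whose $p$-part and prime-to-$p$ part surject onto the wild and tame parts of $({\Bbb Z}/p^m)^\times$. The main obstacle, and the step deserving the most care, is the ramification analysis of the previous paragraph: establishing that the cover is genuinely totally (wildly) ramified over the supersingular points with inertia all of $({\Bbb Z}/p^m)^\times$, controlling the conductor to pin down $r$, and verifying that the cusps contribute to the ramification ideal only through ${\mathfrak m}_\infty$ (in particular that $\sigma=0$, where $\lambda=\infty$, does not introduce ramification outside the allowed modulus). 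These are exactly the Igusa-curve inputs of \cite{KM}, so the argument reduces to quoting them in the present normalization.
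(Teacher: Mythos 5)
Your proof is correct and follows essentially the same route as the paper: the paper likewise reads off from the Igusa-cover structure (citing Deligne--Rapoport V, Lemma 4.16, where you cite the equivalent Katz--Mazur theory) that $( {\mathcal C}_{A^0_m})_k\to M(4)_k$ is a ramified abelian covering of group $({\Bbb Z}/p^m)^\times$, totally ramified over the supersingular points, identifies those points with the roots of the Deuring polynomial $p(\sigma)=\sigma^{p-1}H(\lambda)$, and then concludes (implicitly) via the ray class field criterion stated just before the lemma. Your additional care about the conductor exponent $r$ and the behaviour at the cusps only spells out what the paper leaves implicit.
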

\begin{proof}
By \cite{DR}  V, Lemma 4.16,   the morphism $  ({\mathcal C}_{A^0_m})_k\to  M( 4)_k$ is     a ramified abelian covering of    group $({\Bbb
Z}/p^m)^{\times}$ and it is
totally ramified  over the pairs $(E, \iota_4) $,  with $E$ a supersingular elliptic curve.
This  means that it  is totally ramified for the
values of $\sigma$ where  $E$ is supersingular. These values are given by the roots of the
Deuring polynomial $\sigma^{p-1}  H(\lambda)$, $\lambda=(\sigma+1/2\sigma)^2$,
(see for example, \cite{H} Chapter 13, {\S}3).
\end{proof}

We have that
$\Gamma(1)$ operates transitively over the irreducible components of
$ M(4p^m)_k=\cup^{l-1}_{i=0}({\mathcal C}^0_{A^i_m})_k $. By fixing $A^0_m$ as in the introduction, there exists $g_i\in \Gamma(1)$, with
$ g_i(({\mathcal C}^0_{A^0_m})_k)=({\mathcal C}^0_{A^i_m})_k  $. The action of the elements $R_l$ on $g_i(({\mathcal C}^0_{A^0_m})_k)$ is given by $g_i\cdot R_l\cdot g_i^{-1}$.

The Galois group of   $\Sigma_{A^0_m}/k(\sigma)$  is identified with
the subgroup of $\Gamma(1)/\Gamma( p^m)$ formed by the classes of
$\{R_i\}_{1\leq i< p^m } \in \Gamma(1)$, where
$$R_i\equiv   \left(\begin{matrix} i   & 0
  \\0 & i^{-1}
  \end{matrix}\right)\mathrm{mod}\,\Gamma(  p^m) \text{, } R_i\equiv  \mathrm{Id}\,\mathrm{mod} \,\Gamma (  4),$$
  with $1\leq i < p^m$ and $(i,p)=1$.

\begin{rem}\label{tri}
Let $\pi: Y_{q(\sigma)}\to ({\mathcal C}_{A^0_m})_k$ be the morphism induced by the inclusion $\Sigma_{A^0_m}\subset K_{q(\sigma)}$. Let   us denote  $\rho:Gal(K_{q(\sigma)}/k(\sigma))\to ({\Bbb Z}/p^{m})^\times,$ the surjective morphism
between the Galois groups of the extensions $K_{q(\sigma)}/k(\sigma) $ and $\Sigma_{A^0_m}/k(\sigma)$.
The pushforward to $({\mathcal C}_{A^0_m})_k\times ({\mathcal C}_{A^0_m})_k$ of the correspondence $D_k$,
   $$(\pi\times \pi)_*(D_k)=\mathrm{deg}(\pi)\cdot\sum_{i=0}^{d-1}
  (\underset { \mathrm{deg}(r(\sigma))=d-1- i}  {\sum_{r(\sigma)\text{(monic)}}}
\Gamma(R_{r(\sigma)}) *{\Gamma}(\mathrm{Fr}^i))$$
   is again a trivial correspondence,    up to vertical and horizontal
correspondences, where  $R_{ {r(\sigma)}}=\rho(\beta_{r(\sigma)})$. Thus, as the ring of   classes of  correspondences is ${\Bbb Z}$-free-torsion, $\mathrm{deg}(\pi)^{-1}\cdot (\pi\times \pi)_*(D_k)$ is also trivial.

Note that  given a proper morphism  $f:Z\to \bar Z$,  between   varieties,  if
$U$ is a $k$-cycle of $Z$, with  $dim \,U=dim\, f(U)$, then the
pushforward  $f_*(U)$  is defined by
$$f_*(U)=[\Sigma_U :\Sigma_{f(U)}]\cdot f(U). $$ Where, $\Sigma_U $ and $\Sigma_{f(U)}$  are the
function fields associated with $U$ and $f(U)$, respectively.

\end{rem}

\begin{rem} Let $Y$ be a   proper curve  over a field $k$  and $ \widetilde {Y}_{\mathrm{red}}$ the normalization of the largest reduced subscheme $Y_{\mathrm{red}}$ of $Y$.  Then, by \cite{BLR}, 9.3 Corollary 11,
   $(\mathrm{Pic}^0_{Y/k})^{ab} \simeq \mathrm{Pic}^0_{\widetilde {Y}_{\mathrm{red}}/k}$. We set $N=\mathrm{dim}  (\mathrm{Pic}^0_{\widetilde {M_1(4p^m)}_k/k})^{ab}$.
\end{rem}

\begin{thm}\label{oper} The operator $$D_K=\sum _{i=0}^{d-1}
 (\underset { \mathrm{deg}(r(\sigma))=d-1- i}  {\sum_{r(\sigma)\text { (monic)}} }
 R_{ r(\sigma) } \cdot  T_{p^{i\delta}})$$ annihilates  a space of dimension $N$ of cusp forms of level $\Gamma(4)\cap \Gamma_1(p^m)$ and weight $2$.
 \end{thm}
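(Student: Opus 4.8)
The plan is to realize $D_K$ as an honest endomorphism of the Jacobian $J_K := \mathrm{Pic}^0_{M_K/K}$ and to show that its kernel has dimension at least $N$; the annihilation of cusp forms is then automatic. Indeed, weight-$2$ cusp forms for $\Gamma(4)\cap \Gamma_1(p^m)$ are the invariant differentials $H^0(J_K,\Omega^1)=\mathrm{Cot}_0(J_K)$, and the operators $T_{p^{i\delta}}$ and the automorphisms $R_{r(\sigma)}$ act on them as the cotangent (pullback) maps of the corresponding endomorphisms of $J_K$, by the Picard functoriality built into the convention of Section \ref{conv}. Since $\mathrm{char}(K)=0$, for any endomorphism $\varphi$ of $J_K$ one has $\mathrm{rank}\big(\varphi^*|_{\mathrm{Cot}_0}\big)=\dim\mathrm{Im}(\varphi)=\dim J_K-\dim\mathrm{Ker}(\varphi)$; hence $D_K$ annihilates a subspace of cusp forms of dimension exactly $\dim\mathrm{Ker}(D_K)$ (the passage to the Rosati dual, if present, preserves kernel dimensions). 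It therefore suffices to prove $\dim\mathrm{Ker}(D_K)\geq N$.

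To bound this kernel I would pass to the special fibre. By Lemma \ref{ab} applied to $g=D_K$ it is enough to show $\dim\mathrm{Ker}\big((D_K)_k^{ab}\big)\geq N$, where $(D_K)_k^{ab}$ is the endomorphism induced on the abelian part $\mathcal J_k^{ab}$ of the special fibre of the N\'eron model; Lemma \ref{T} guarantees that this reduction is the operator built from $\overline\Gamma_p$. Because $N=\dim\mathcal J_k^{ab}$ by definition, and because $\mathcal J_k^{ab}=(\mathrm{Pic}^0_{M_k/k})^{ab}\simeq \mathrm{Pic}^0_{(\widetilde{M_k})_{\mathrm{red}}/k}=\prod_{i=0}^{m}\mathrm{Pic}^0_{(\mathcal C_{A^0_i})_k/k}$, the required inequality is equivalent to the vanishing $(D_K)_k^{ab}=0$, i.e. to the claim that the reduced operator kills each factor $\mathrm{Pic}^0_{(\mathcal C_{A^0_i})_k/k}$.

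The crux is to identify, on each component $\Psi_i\big((\mathcal C_{A^0_i})_k\big)$, the reduction of $D_K$ with a pushed-forward Anderson--Coleman correspondence. On the $i$-th component the Eichler--Shimura congruence of Lemma \ref{es} shows that $T_p$ reduces to the $p$-Frobenius correspondence on the Zariski-dense ordinary locus, so that $T_{p^{i\delta}}=T_p^{i\delta}$ reduces to $\Gamma(\mathrm{Fr}^i)$ with $\mathrm{Fr}$ the $p^\delta$-Frobenius, exactly matching the Frobenius power occurring in $D_k$; meanwhile the $R_{r(\sigma)}$ reduce to the automorphisms $\Gamma(R_{r(\sigma)})$ acting through the Galois action on $\Sigma_{A^0_i}$. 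Since $\Sigma_{A^0_i}\subset \Sigma_{A^0_m}\subset K_{q(\sigma)}$, there is a covering $Y_{q(\sigma)}\to (\mathcal C_{A^0_i})_k$, and the argument of Remark \ref{tri} pushes the trivial correspondence $D_k$ forward to the trivial correspondence $\deg(\pi)^{-1}(\pi\times\pi)_*D_k$, which is precisely the reduction of $D_K$ on this factor. As a trivial correspondence (up to vertical and horizontal ones) it acts as $0$ on $\mathrm{Pic}^0_{(\mathcal C_{A^0_i})_k/k}$. Running over $i=0,\dots,m$ yields $(D_K)_k^{ab}=0$, hence $\dim\mathrm{Ker}(D_K)\geq N$, as required.

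I expect the main obstacle to be the component-wise matching described above. Two points need care: first, that the Frobenius congruence of Lemma \ref{es}, stated only on ordinary geometric points, upgrades to an equality of correspondence classes by density of the ordinary locus; and second, that the single Anderson--Coleman datum $q(\sigma)$ (with $\deg q=d$) simultaneously controls every level $i\leq m$ through the tower $Y_{q(\sigma)}\to (\mathcal C_{A^0_m})_k\to (\mathcal C_{A^0_i})_k$, so that one and the same operator $D_K$ reduces to a trivial correspondence on each factor of $\mathcal J_k^{ab}$.
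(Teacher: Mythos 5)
Your proposal is correct and follows essentially the same route as the paper's own proof: reduce via Lemma \ref{ab} to the abelian part of the special fibre of the N\'eron model, decompose it into the factors $\mathrm{Pic}^0_{({\mathcal C}_{A^0_i})_k/k}$, identify the reduction of $D_K$ on each factor with a pushed-forward Anderson--Coleman trivial correspondence using the Eichler--Shimura Lemma \ref{es} and (the argument of) Remark \ref{tri}, and conclude that the reduced operator vanishes there. The one point where the paper is more careful than you are is the desingularization: rather than asserting ${\mathcal J}_k^{ab}\simeq (\mathrm{Pic}^0_{M_k/k})^{ab}$ outright (the identity component of the N\'eron model is $\mathrm{Pic}^0_{\overline M/R_{\mathfrak p}}$, whose special fibre could a priori have extra abelian part coming from the resolution), it only uses that $(\widetilde f_k)^*:(\mathrm{Pic}^0_{\widetilde {M_1(4p^m)}_k/k})^{ab}\to (\mathrm{Pic}^0_{\overline M_k/k})^{ab}$ has finite kernel because $\widetilde f_k$ is surjective, which already yields $\dim\mathrm{Ker}(D_K)\geq N$.
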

 \begin{proof} We have  $(M^0_1(4p^m )_k)_{\mathrm{red}}=\cup^{m}_{i=0} \Psi_i(({\mathcal
C}_{A^0_i})_k)$ (see Remark \ref{psi}) and by the last Remark we have a surjective morphism
$$\mathrm{Pic}^0_{  { M_1(4p^m )_k }  /k }\to \prod^{m}_{i=1}  \mathrm{Pic}^0_{({\mathcal C}_{A^0_i} )_k/k } .$$
Since   $A^0_i\subset A_m^0$, we have  morphisms of curves $\pi_i:({\mathcal C}_{A^0_m})_k\to ({\mathcal C}_{A^0_i})_k$ defined by   $\pi_i(E,\iota_4 , \phi_{p^m} )= (E,\iota_4 , \overline \phi_{p^i} )$, with
$\overline \phi_{p^i}=  {\phi_{p^m}}_{\vert p^{m-i}({\Bbb Z}/p^m)^{\oplus 2}}$. Moreover, because $\pi_i\cdot \mathrm{Fr}=\mathrm{Fr}\cdot \pi_i$  we have that
$(\pi_i
\times \pi_i)_* \Gamma(\mathrm{Fr}) =deg(\pi_i)\cdot  \Gamma(\mathrm{Fr}).$  Here, we have
denoted   the   Frobenius morphism   over ${\mathcal C}_{A^0_m}$ and $ {\mathcal C}_{A^0_i}$ in the
same way. This latter statement also is true for the correspondences associated with the graphics of the  automorphisms $R_j$.

By Remark \ref{tri}, since  $D_k$  annihilates $\mathrm{Pic}^0_{  ({{\mathcal C}_{A^0_m}})_k/k }$, the correspondence $deg(\pi_i)^{-1}\cdot(\pi_i\times \pi_i)_*(D_k)$ also  annihilates $\mathrm{Pic}^0_{  ({{\mathcal C}_{A^0_i}})_k/k }$, with $0\leq i\leq m$.

The Theorem is deduced from the Eichler-Shimura Theorem  (c.f: Lemma \ref{es}),   from Lemma \ref{ab}
 applied to $\mathrm{Pic}^0_{M_1(4p^m)_K/K}$ and to the endomorphism  $D_K$,  knowing that $D_k$  annihilates
 $\prod_{i=1}^m\mathrm{Pic}^0_{  ({{\mathcal C}_{A^0_i}})_k/k }$.

 We bear in mind that the homomorphism $$(\widetilde f_k)^*:(\mathrm{Pic}^0_{\widetilde {M_1(4p^m)}_k/k})^{ab}\to (\mathrm{Pic}^0_{\overline {M  }_k/k})^{ab}$$ has a finite kernel  because
   $\widetilde f_k:    (\widetilde {\overline M_k})_{\mathrm{red}}\to {( \widetilde  {M_1(4p^m)}_k)}_{\mathrm{red}}$ is surjective.
 \end{proof}

\subsection{An explicit calculation}\label{cal}

The morphisms $ ({{\mathcal C}_{A^0_1}})_k\to ({{\mathcal C}_{A^0_0}})_k$ and $ ({{\mathcal C}_{A^0_m}})_k\to ({{\mathcal C}_{A^0_1}})_k$ give  us extensions  $k(\sigma)\subset\Sigma_{A^0_1} $ and
$\Sigma_{A^0_1}\subset\Sigma_{A^0_m} $, respectively, which  are
  abelian extensions of groups $({\Bbb Z}/p )^\times$ and $ {\Bbb Z}/p^{m-1}  $, respectively . One can
obtain the generator of the extension
$k(\sigma)\subset\Sigma_{A^0_1} $,  because it is a totally ramified
extension and  its ramification is given by the polynomial
$p(\sigma)$, (c.f.\cite{I}). Thus, $\Sigma_{A^0_1}=k(\sigma,
p(\sigma)^{1/p-1})$.

According to  \cite{K}, Lemma 2.5   and (5.5),  together with  the following two Lemmas,  allow us to make
explicit calculations for the  Artin-Schreier generators for  the
   extensions
 $\Sigma_{A^0_1} \subset\Sigma_{A^0_m}.$
We denote ${\mathcal P}(a)=a^{p }-a$.
\begin{lem}   The extension $k((t))({\mathcal P}^{-1}( 1/t^n))/k((t))$ is totally ramified over $t=0$.
\end{lem}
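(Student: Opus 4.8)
The plan is to work with a fixed root $y$ of the Artin--Schreier equation ${\mathcal P}(y)=y^p-y=1/t^n$, set $L=k((t))(y)$, and use that $k$ has characteristic $p$, so that $L/k((t))$ is either trivial or cyclic of degree $p$. First I would normalize the exponent: since $(1/t^{n/p})^p=1/t^n$ in characteristic $p$, whenever $p\mid n$ the substitution $y\mapsto y-1/t^{n/p}$ replaces the right-hand side $1/t^n$ by $1/t^{n/p}$ while generating the same field over $k((t))$, because $1/t^{n/p}\in k((t))$. Iterating, I reduce to the case $\gcd(n,p)=1$, which loses no generality for the statement of the Lemma.

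Next I would establish that the extension is genuinely of degree $p$. The key observation is that every element of the image ${\mathcal P}(k((t)))$ has a pole at $t=0$ of order divisible by $p$: if $f\in k[[t]]$ then ${\mathcal P}(f)\in k[[t]]$, while if $f$ has a pole of order $m>0$ then $f^p$ dominates and ${\mathcal P}(f)$ has a pole of order $mp$. Since $\gcd(n,p)=1$, the element $1/t^n\notin{\mathcal P}(k((t)))$, so the Artin--Schreier extension is nontrivial and hence $[L:k((t))]=p$.

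For total ramification I would argue directly with valuations. As $k((t))$ is complete, the normalized valuation $v$ with $v(t)=1$ extends uniquely to a valuation $w$ on $L$; write $e$ for the ramification index, normalizing $w$ so that $w(t)=e$ and $w(L^\times)={\Bbb Z}$. From $y^p-y=1/t^n$ and $w(1/t^n)=-ne<0$ one sees $w(y)<0$, whence $w(y^p)=p\,w(y)<w(y)$ and the left-hand side has valuation $p\,w(y)$. Comparing gives $p\,w(y)=-ne$, so $p\mid ne$; since $\gcd(n,p)=1$ this forces $p\mid e$. As $e\le[L:k((t))]=p$, we conclude $e=p$. Finally, because the residue field $k$ is finite, hence perfect, the separable extension $L/k((t))$ is defectless and $ef=p$; thus $f=1$ and $L/k((t))$ is totally ramified over $t=0$.

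The routine parts are the pole-order bookkeeping and the uniqueness of the extended valuation over the complete field $k((t))$. The one point that must be handled with care is the reduction to $\gcd(n,p)=1$, since without it the identity $p\,w(y)=-ne$ no longer forces $p\mid e$ and the ramification index cannot be read off directly. Once the exponent is coprime to $p$, the valuation identity is the entire content and there is no further obstacle.
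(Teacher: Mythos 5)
Your proof is correct, and it takes a genuinely different route from the paper's. The paper argues geometrically: for $n=1$ it sets $x=1/y$ with $y\in{\mathcal P}^{-1}(1/t)$, observes that $x^p+x^{p-1}t-t=0$ is non-singular at $(0,0)$, and reads off total ramification from the support of the relative differential module $\Omega_{(k[[t]][x]/(x^p+x^{p-1}t-t))/k[[t]]}$; for general $n$ it then base-changes along the totally ramified extension $k((t^n))\subset k((t))$, writing $k((t))({\mathcal P}^{-1}(1/t^n))$ as $k((t^n))({\mathcal P}^{-1}(1/t^n))\otimes_{k((t^n))}k((t))$. You instead work purely with valuations: the Artin--Schreier substitution $y\mapsto y-1/t^{n/p}$ reduces to $\gcd(n,p)=1$, the pole-order description of ${\mathcal P}(k((t)))$ shows the extension has degree $p$, and the identity $p\,w(y)=-ne$ then forces $e=p$. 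Your route is more elementary and uniform in $n$; in particular the explicit reduction to $\gcd(n,p)=1$ cleanly covers the case $p\mid n$, which is precisely where the paper's base-change step is delicate (a compositum of totally ramified extensions need not be totally ramified in general, and when $p\mid n$ the divisibility $p\mid ne$ no longer pins down $e$, so a reduction like yours is implicitly needed there too). What the paper's approach buys is geometric information in the formalism used throughout the article: it exhibits the extension as a non-singular plane curve over $k[[t]]$ with ramification computed via differentials, and its base-change trick is very quick when $\gcd(n,p)=1$. One minor simplification to your write-up: the appeal to defectlessness is unnecessary, since $e=p=[L:k((t))]$ together with $ef\le [L:k((t))]$ already gives $f=1$ and total ramification.
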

\begin{proof} An element $1/x\in {\mathcal P}^{-1}( 1/t)$ satisfies     $x^p+x^{p-1}\cdot t-t=0$. This equation   is a non-singular curve   at the point $(0,0)$.
The Lemma is proved for $n=1$   by considering the support of the differential $k[[t]][x]$-module
$$\Omega_{(k[[t]][x]/x^p+x^{p-1}\cdot t-t)/k[[t]]}.$$
Bearing in mind  the totally ramified extension $k((t^n))\to k((t))$ and $k((t))( {\mathcal P}^{-1}(1/t^n))=k((t^n))( {\mathcal P}^{-1}(1/t^n))\otimes_{k((t^n))}k((t))$, one proves the Lemma  for any $n$.
\end{proof}

\begin{lem} If $k(t, {\mathcal P}^{-1}( r(t)/s(t)))/k(t)$ is an extension such that its ideal of ramification divides the polynomial  $p(t)$, then there exists $l\in {\Bbb N}$ with $r(t)/s(t)=h(t)/p(t)^l+{\mathcal P}(u(t)/v(t))$ and $\mathrm{deg}(h(t))\leq  \mathrm{deg}(p(t)^l)$.
\end{lem}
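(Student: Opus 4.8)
The plan is to read the statement as a normalization result in Artin--Schreier theory. Write $f:=r(t)/s(t)$ and recall that the extension $k(t,{\mathcal P}^{-1}(f))/k(t)$ depends only on the class of $f$ in the ${\Bbb F}_p$-vector space $k(t)/{\mathcal P}(k(t))$: replacing $f$ by $f+{\mathcal P}(u(t)/v(t))$ leaves both the field and its ramification unchanged. Since this is exactly the freedom allowed in the conclusion, it suffices to exhibit, inside the class $f+{\mathcal P}(k(t))$, a representative of the form $h(t)/p(t)^l$ with $\mathrm{deg}(h)\leq \mathrm{deg}(p(t)^l)$.

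First I would pass to a \emph{reduced} representative $f_0$, i.e. one all of whose poles have order prime to $p$. This is the standard descent on pole orders: if $f$ has a pole of order $pm$ at a place $\mathfrak q$, I use that the residue field at $\mathfrak q$ (a finite extension of the finite field $k$) is perfect to extract the $p$-th root of the leading coefficient, choose $g$ with a single pole of order $m$ at $\mathfrak q$ realizing that coefficient, and subtract ${\mathcal P}(g)=g^p-g$; this strictly lowers the pole order at $\mathfrak q$ and introduces no pole anywhere else (in particular none at $\infty$, since such a $g$ vanishes at $\infty$). Grouping Galois-conjugate places keeps every step defined over $k$, and iterating over all places of ${\Bbb P}^1_k$ produces $f_0$ with all pole orders prime to $p$ and $f-f_0\in{\mathcal P}(k(t))$.

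The key step is then the ramification--pole dictionary for Artin--Schreier covers: once $f_0$ is reduced, a place $\mathfrak q$ ramifies in $k(t,{\mathcal P}^{-1}(f_0))$ precisely when $f_0$ has a pole there, the conductor exponent at $\mathfrak q$ being $m_{\mathfrak q}+1$ with $m_{\mathfrak q}$ the pole order. Feeding in the hypothesis that the ideal of ramification divides $p(t)$ then forces two things: every pole of $f_0$ must sit over a zero of $p(t)$, and there can be no pole at $\infty$ (as $\infty$ does not divide $p(t)$). Thus $f_0$ is regular away from the zero locus of $p(t)$ and at infinity.

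It remains to read off the shape. Taking $l$ large enough that $p(t)^l$ vanishes to order at least $m_{\mathfrak q}$ at each zero $\mathfrak q$ of $p(t)$, the product $f_0\,p(t)^l$ is a regular function on ${\Bbb A}^1_k$, hence a polynomial $h(t)$, so $f_0=h(t)/p(t)^l$; and the absence of a pole at $\infty$ is exactly the inequality $\mathrm{deg}(h)\leq \mathrm{deg}(p(t)^l)$. Setting ${\mathcal P}(u(t)/v(t)):=f-f_0$ gives the desired decomposition. The main obstacle is the third step --- correctly translating ``ideal of ramification divides $p(t)$'' into ``poles of the reduced $f_0$ lie only over zeros of $p(t)$ and not at $\infty$''; the reduction in the second step and the bookkeeping in the last are routine once the perfectness of the residue fields is used.
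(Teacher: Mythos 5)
Your proof is correct and follows essentially the same route as the paper's (very terse) one, which simply invokes the preceding lemma on $k((t))({\mathcal P}^{-1}(1/t^n))$, the decomposition of $r(t)/s(t)$ into simple fractions, and the additivity of ${\mathcal P}$ --- i.e., exactly your place-by-place reduction modulo ${\mathcal P}(k(t))$, the ramification--pole dictionary, and the final clearing of denominators against $p(t)^l$. If anything, your write-up is more complete, since it makes explicit the reduction to pole orders prime to $p$ (using perfectness of the residue fields), a step the paper's sketch leaves implicit even though it is needed: a pole of order divisible by $p$ need not produce ramification, so the partial-fraction pieces cannot be fed to the local lemma without first being reduced.
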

\begin{proof}  This is deduced from the above Lemma   bearing in mind  the decomposition in simple fractions   of $r(t)/s(t)$ and the fact that ${\mathcal P}$ is additive.
\end{proof}
By the previous Lemma to make explicit calculations   of the Artin-Schreier generators of $\Sigma_{A^0_1}\subset\Sigma_{A^0_m} $ it suffices to calculate $l$ and $h(t)$ for these extensions; $p(t)$ is the polynomial $p(\sigma)$ defined in section \ref{an}.

As example we shall make this calculation precise for $m=2$.  Following the notation of \cite{K} {\S}2 and \cite{Se}  2.2,  the Artin-Shreier generator for $\Sigma_{A_1}\subset
\Sigma_{A_2}$ is the reduction   $\mathrm{mod}\, p$  of
$\psi:=\beta(1)(\frac{a_p-1}{p })=\frac{a_p-1}{p\cdot E_{p-1} }\in   {\Bbb
F}_p((q))$, where $a_p$  is a modular form of weight $p-1$. By
\cite{Se}, 2.2, we have that
$$(\psi)^p-\psi= -\frac{1}{24}\cdot
\beta(1)(\theta^{p-2}(E_{p+1}))=\frac{\theta^{p-2}(E_{p+1})}{E_{p-1}^{3}}\quad (\mathrm{mod}\, p),$$
 with $\theta=q \frac{d}{dq}$ and $\theta^{p-2}(E_{p+1})$    a modular form of weight
 $ 3(p-1)$.

Here, $E_{p+1}$ and $E_{p-1}$ denote  the normalization,  (constant
term = $1$), of the Eisenstein series of weights $p+1$ and $p-1$,
respectively.

The algebra of modular functions is given by the graded ring of
polynomials  ${\Bbb Z}[g_2, g_3]$, where $g_2$ and $g_3$ have grades
 $4$ and $6$, respectively. If
$A(g_2,g_3)(\frac{dx}{2y})^{\otimes p-1}$, ($A(g_2,g_3)\in  {\Bbb
F}_p[g_2, g_3]$), is  the Hasse invariant
 of the universal
elliptic curve in the Weierstrass form  $y^2=4x^3-g_2x-g_3$  then
the class of $E_{p-1}$ in ${\Bbb F}_p[g_2, g_3]$ is $A(g_2,g_3)$.

Since the Deuring polynomial gives the Hasse invariant for the
universal elliptic curve in the Legendre form
$y^2=x(x-1)(x-\lambda)$, we have that
 $\Sigma_{A_2}=\Sigma_{A_1}[\psi]$
with
 $\psi^p-\psi-\frac{h(\sigma)}{p(\sigma)^3}.  $ Here,
$p(\sigma)=\sigma^{p-1}\cdot H(\lambda)$ with $\lambda=(\sigma+1/2\sigma)^2$.

Moreover, because
$\frac{\theta^{p-2}(E_{p+1})}{E_{p-1}^{3}}$ only has poles over
the zeroes of $E_{p-1}(z)dz$ we have that $\mathrm{deg}(h(\sigma))\leq \mathrm{deg}(p(\sigma)^3)(= 3(p-1))$. To calculate  $h(\sigma)$   it suffices to
bear  in mind the expansions    on the parameter  $q^{1/4}$  of
$\sigma$, $\frac{\theta^{p-2}(E_{p+1})}{E_{p-1}^{3}}$  and the
congruence
$$  \frac{\theta^{p-2}(E_{p+1})}{E_{p-1}^{3}} \equiv
 \frac{h(\sigma)}{p(\sigma)^3} \quad (\mathrm{mod}\, \mathfrak p)  .$$

Bearing in mind this latter calculation  and the action of  $(k[\sigma]/p(\sigma)^3
)^\times $ on $\psi$, given by
the class field theory, (c.f. \cite{Ha} Chapter 9),    one can explicitly obtain the
group homomorphism
$$\rho:(k[\sigma]/p(\sigma)^3 )^\times  \to  ({\Bbb
Z}/p^2)^\times.$$

\subsection{A way to calculate the $p$-term   of the Hasse-Weil $L$-function of $M_1(4p^m)$}

To obtain the $p$-term in the Euler product of the Hasse-Weil $L$-function of $M_1(4p^m)$ it suffices to obtain the Euler zeta function of the curve $ ({{\mathcal C}_{A^0_m}})_k$.

 We, now, consider the incomplete $L$-function associated with the Galois extension $K_{q(\sigma)}/k(\sigma)$:
$$\prod_{  x\in
\vert {\Bbb P}^1(k)\vert\setminus T} (1-F_x\cdot t^{deg(x)})^{-1}=\frac{U(t)}{1-p^{\delta}\cdot t},$$
(C.f \cite{An} 1.3), where $U(t)$ is an explicit polynomial in $  {\Bbb Z}[G][t]$; $G= (k
[\sigma]/p(\sigma))^\times $; $F_x$ is the Frobenius element for $x\in
\vert {\Bbb P}^1(k)\vert\setminus T$,   $T$ being the places in $\vert {\Bbb P}^1(k)\vert$ given by the divisors of $q(\sigma){\mathfrak m}_\infty$.

We define $\text{Norm}_{{\Bbb Z}[G]/{\Bbb Z}[H]}( U(t) )$   as the determinant of the  morphism of multiplication by $U(t)$ in the ${\Bbb Z}[t][H] $-module ${\Bbb Z}[t][G] $. $H$ being the kernel of the group morphism
$$\rho:(k[\sigma]/q(\sigma))^\times  \to  ({\Bbb
Z}/p^m)^\times$$
given by the Galois extensions $k(\sigma)\subset \Sigma_{A^0_m}\subset K_{q(\sigma)}$.

Let
$\pi:({{\mathcal C}_{A^0_m}})_k\to {\Bbb P}^1(k)$
be the morphism of smooth curves given by   $k(\sigma)\subset \Sigma_{A^0_m}$.
Bearing in mind that
 $$\text{Norm}_{{\Bbb Z}[G]/{\Bbb Z}[H]} (1-F_x\cdot t^{deg(x)}) =\prod_{ z\in
  \pi^{-1}(x)} (1-F_z\cdot t^{deg(z)})^{-1},$$
we can calculate the  incomplete  $L$-function
$$\prod_{ z\in
\vert ({{\mathcal C}_{A^0_m}})_k\vert\setminus \pi^{-1}(T)} (1-F_z\cdot t^{deg(z)})^{-1}$$
as a faction of polynomials $\frac{V(t)}{W(t)},$
where
$$\frac{V(t)}{W(t)}=\frac{\text{Norm}_{{\Bbb Z}[G]/{\Bbb Z}[H]}( U(t) )}{\text{Norm}_{{\Bbb Z}[G]/{\Bbb Z}[H]}(1-p^{\delta}\cdot t)}.$$

By making $h=1$ for each $h\in H$ in the quotient $\frac{V(t)}{W(t)}$, we obtain  a fraction of polynomials   $\frac{v(t)}{w(t)}$. The Euler zeta function of $ ({{\mathcal C}_{A^0_m}})_k$ is
$$\frac{v(t)}{w(t)}\cdot \prod_{  z\in
 \pi^{-1}(T)} (1-  t^{deg(z)})^{-1} .$$
By taking account the above section we can obtain $H$ in an explicit way  and we can calculate $\frac{v(t)}{w(t)}$.
Moreover, we obtain the finite product $\prod_{  z\in
 \pi^{-1}(T) } (1-  t^{deg(z)})^{-1}  $ by considering   $\pi^{-1}(\infty)$ and the roots of
$q(\sigma)$, which are given by the Deuring polynomial $H(\lambda)$.

\section{The cuspidal principal part ($\mathrm{mod} \,p$) of certain meromorphic modular functions}

\subsection{Line bundles from an adelic point of view}
 Let  $Y$  be a smooth, projective  and geometrically irreducible curve over a field $k$ of $q=p^\delta$ elements; $\Sigma$ denotes its function field. Let $\mathfrak q$ be a place of
$\Sigma$,  $t_\mathfrak q$ a local parameter for $\mathfrak
q$  and $k( \mathfrak q)$ its residual field. For each $k$-algebra $R$, we consider the group
 $(R\otimes k(\mathfrak q))((t_\mathfrak q)) :=(R\otimes k(\mathfrak q))[[t_\mathfrak q]][t_\mathfrak q^{-1}]   $
and the adele group
$${\Bbb A}_\Sigma(R):=\prod'_{\mathfrak q\in
\vert Y\vert} (R\otimes
 k(\mathfrak q))((t_\mathfrak q)),$$
where $\prod'$ refers to  the adeles with a non-trivial principal part
  only on a finite number of places of $\Sigma$. The idele group  $I_\Sigma(R)$  is the group of units of
${\Bbb A}_\Sigma(R)$. Thus,
 $$I_\Sigma(R) =(\prod'_{\mathfrak q\in \vert Y\vert} (R\otimes
 k(\mathfrak q))((t_\mathfrak q)))^\times.$$
Let $U_\Sigma(R)$ be the subgroup
 $ \prod_{\mathfrak q\in \vert Y\vert}(R\otimes k(\mathfrak q))[[t_\mathfrak q]]^\times .$
Let $\Sigma\otimes_k R$ be denoted by $\Sigma_R$. One can
embed  the group  $(\Sigma_R)^\times$  diagonally into $I_\Sigma(R)$.

 The quotient group
$$\frac{I_\Sigma(R)}{\Sigma^\times_R\cdot U_\Sigma(R)} $$
is isomorphic to the class group of line bundles $L$ on $Y\otimes
R$, such that if
$j: \mathrm{Spec}(\Sigma_R) \hookrightarrow Y_R$
is the natural inclusion  then the pullback  $j^*L$  is isomorphic
to the trivial line bundle. See, for example,  \cite{BL}, Section 2, and Lemma 3.4.

If we denote by  $J_Y$  the Jacobian variety of $Y$ and $\mu$ is an idele of degree $1$, we have a
monomorphism of groups
$$\frac{I_\Sigma(R)}{ \mu^{\Bbb Z}\cdot \Sigma^\times_R\cdot U_\Sigma(R)}\hookrightarrow
J_Y(R).$$

This monomorphism is an isomorphism if and only if $R$ is a finite
${k}$-algebra. Recall that $J_Y(R)$  is the group of isomorphism classes of line
bundles over $Y_R$ of degree $0$.

 In the following Lemma  we recall that the tangent
space  of the Jacobian on the zero element  can be identified with
the cohomology group $H^1(Y,\o_Y)$ and by duality with
$H^0(Y,\omega)^\vee$  ($\omega$ is the dualizating sheaf of $Y$):
 \begin{lem}\label{ep} We have an exact sequence of groups
 $$0\to H^1(Y,\o_Y) \to J_Y(k[\epsilon ]) \overset{\epsilon =0}\to J_Y(k
 )\to 0,\quad (\epsilon^2=0).$$
 \end{lem}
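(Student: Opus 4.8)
The final statement (Lemma \ref{ep}) asserts the exactness of
$$0\to H^1(Y,\o_Y) \to J_Y(k[\epsilon ]) \overset{\epsilon =0}\to J_Y(k)\to 0,$$
where $\epsilon^2=0$. The plan is to interpret each term via the adelic description of line bundles recalled just above, so that the whole sequence becomes a statement about ideles over the dual-number ring $k[\epsilon]$ versus over $k$. Concretely, since $J_Y(R)$ is the group of isomorphism classes of degree-$0$ line bundles on $Y_R$, the reduction map $\epsilon=0$ is the functorial morphism induced by the ring surjection $k[\epsilon]\to k$, $\epsilon\mapsto 0$; this is the base-change $J_Y(k[\epsilon])\to J_Y(k)$.

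**Surjectivity and the kernel.** First I would observe that $\epsilon=0$ is surjective: the inclusion $k\hookrightarrow k[\epsilon]$ splits the reduction, giving a section $J_Y(k)\to J_Y(k[\epsilon])$, so every class over $k$ lifts. The heart of the argument is identifying the kernel of $\epsilon=0$ with $H^1(Y,\o_Y)$. For this I would use the adelic presentation
$$J_Y(R)\simeq \frac{I_\Sigma(R)}{\mu^{\Bbb Z}\cdot \Sigma^\times_R\cdot U_\Sigma(R)}$$
valid for finite $k$-algebras $R$ (in particular $R=k[\epsilon]$ is finite over $k$). The kernel of the reduction then consists of idele classes that are trivial modulo $\epsilon$. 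Writing an idele over $k[\epsilon]$ locally as $u(1+\epsilon a)$ with $u$ an idele over $k$, reduction to $k$ kills the $\epsilon a$ part; an element lies in the kernel precisely when its $k$-part is trivial, i.e. when it can be represented by ideles of the form $1+\epsilon \,a_{\mathfrak q}$, with $a_{\mathfrak q}\in (\Sigma)_{\mathfrak q}$. Because $\epsilon^2=0$, the map $a\mapsto 1+\epsilon a$ is a group isomorphism from the additive adele group $\mathbb A_\Sigma(k)$ onto the $\epsilon$-part of the multiplicative ideles, carrying the diagonal $\Sigma_k^\times$-translates to $\Sigma$ and the units $U_\Sigma(k)$ to the integral adeles $\prod_{\mathfrak q} (k(\mathfrak q))[[t_{\mathfrak q}]]$. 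Hence the kernel becomes
$$\frac{\mathbb A_\Sigma(k)}{\Sigma + \prod_{\mathfrak q}\o_{\mathfrak q}},$$
which is exactly the standard adelic computation of $H^1(Y,\o_Y)$.

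**The cohomological identification.** The last identification is the classical one: for a smooth projective curve the sheaf cohomology $H^1(Y,\o_Y)$ is computed by the adelic complex $\Sigma\oplus \prod_{\mathfrak q}\o_{\mathfrak q}\to \mathbb A_\Sigma(k)$, whose cokernel is precisely $H^1(Y,\o_Y)$ (equivalently, by Serre duality, $H^0(Y,\omega)^\vee$). So I would invoke this standard fact to finish. The degree-$1$ idele $\mu$ plays no role in the kernel since the $\epsilon$-part has degree $0$ automatically, so the $\mu^{\Bbb Z}$ quotient drops out of the kernel computation cleanly.

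**Anticipated main obstacle.** The routine parts are the splitting (surjectivity) and the cohomological identification, both standard. The step requiring genuine care is verifying that the bijection $a\mapsto 1+\epsilon a$ matches up the three relevant subgroups correctly — that the diagonal embedding of $\Sigma_{k[\epsilon]}^\times$ restricts, on the $\epsilon$-part, to the additive diagonal $\Sigma$, and that $U_\Sigma(k[\epsilon])$ restricts to the integral additive adeles $\prod_{\mathfrak q}\o_{\mathfrak q}$. This bookkeeping is where one must be precise about the multiplicative-to-additive dictionary induced by $\epsilon^2=0$; once it is in place the exactness follows formally. I expect this compatibility check to be the only delicate point, and it should be dispatched in a few lines by writing out units $(1+\epsilon a)(1+\epsilon b)=1+\epsilon(a+b)$ and noting the isomorphism of the relevant local and global subgroups.
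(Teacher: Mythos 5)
Your proposal is correct and follows essentially the same route as the paper: the paper's proof also rests on the adelic presentation of $J_Y(k[\epsilon])$ (valid since $k[\epsilon]$ is a finite $k$-algebra) and the decomposition of the idele class group over $k[\epsilon]$ into the class group over $k$ plus $\epsilon\cdot\frac{\Bbb A}{\Sigma+O_\Sigma}$, the latter being the adelic description of $H^1(Y,\o_Y)$. Your write-up merely makes explicit the multiplicative-to-additive dictionary $a\mapsto 1+\epsilon a$, the matching of the subgroups $\Sigma^\times_{k[\epsilon]}$ and $U_\Sigma(k[\epsilon])$, and the irrelevance of $\mu^{\Bbb Z}$, all of which the paper leaves implicit in its one-line identity.
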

\begin{proof} It suffices to take into account that $k[\epsilon]$  is a finite $k$-algebra   and that
$$J_Y(k[\epsilon])\simeq \frac{I_\Sigma(k[\epsilon])}{\mu^{\Bbb Z} \cdot\Sigma^\times_{k[\epsilon]}\cdot U_\Sigma({\Bbb
F}_q[\epsilon])}=\frac{I_\Sigma  }{\mu^{\Bbb Z} \cdot\Sigma^\times \cdot
U_\Sigma}+\epsilon \cdot \frac{\Bbb A}{\Sigma +O_\Sigma},$$
where $O_\Sigma$ denotes  the integer adeles.
\end{proof}

\begin{rem}\label{mu}
Let $D$ be a Cartier divisor given by the class of an
idele
$$  \mu_0+\mu_1\epsilon\in \frac{I_\Sigma(k[\epsilon])}{  U_\Sigma(k[\epsilon])}=\frac{I_\Sigma
}{  U_\Sigma}+\epsilon \cdot \frac{\Bbb A}{ O_\Sigma}.$$
  According to  Lemma \ref{Fr}, $    \Gamma(\mathrm{Fr})(D) $ and $^t\Gamma(\mathrm{Fr})(D) $ are
given by
  $\mu_0+\mu_1\epsilon^q$ and $\mu^q_0+\mu_1^q\epsilon $, respectively. Note
 that
$ \epsilon^{q }=0$.
\end{rem}

\subsection{Correspondences in an additive setting}

We now prove a Lemma that will be used to make explicit calculations in the next section.
 Let $\mathrm{Spec}(B)\subset Y$  be an  open subscheme     and let  $C$ be a correspondence on $Y$ that is
   trivial  over $\mathrm{Spec}(B)\otimes \mathrm{Spec}(
  B)$. We set that  $C$  over $\mathrm{Spec}(B\otimes   B)$ will be  given by
the zero locus of a regular function $H(b,\bar b) \in B\otimes   B$. Let
$\mathfrak q\in \mathrm{Spec}(B) $ be a geometric point
and $t_\mathfrak q$   a local parameter for $\mathfrak q$. For
easy notation, we can assume $\mathfrak q$ to be rational. We
denote by $D^{-r\cdot \mathfrak q}$, $D_{ r\cdot \mathfrak q}$ the
Cartier divisors on $Y\otimes k[\epsilon]$ given by the line bundles associated with the
ideles of $I_\Sigma(k[\epsilon])$, whose entries are  $1-\epsilon
t_\mathfrak q^{-r}$  and $ t_\mathfrak q^{ r}-\epsilon $ at $\mathfrak q$ and $1$ elsewhere,
respectively.

\begin{lem}\label{re} With the   notations and conventions of section \ref{conv}, $C(D^{-r\cdot \mathfrak
q})$  is a Cartier divisor such that over the open subset  $
\mathrm{Spec}(B[\epsilon])\subset Y[\epsilon]$ is given by
 $1+\epsilon \mathrm{Res}_\mathfrak q(t_\mathfrak q^{-r} {\mathrm{d}}\,
\mathrm{log} \,H(t_\mathfrak q,\bar b))\in 1+\epsilon\cdot \Sigma.$

 To calculate this residue,
one considers   $H(b,\bar b)= H(t_{\mathfrak q},\bar b) \in B ((t_\mathfrak q))$ and $ \mathrm{d}
\,
\mathrm{log}\,H(t_\mathfrak q,\bar b)= \frac{\mathrm{d} \,
\mathrm{log} \,H(t_\mathfrak q,\bar b)}{\mathrm{d}\, t_\mathfrak q} \mathrm{d}\, t_\mathfrak q$.
\end{lem}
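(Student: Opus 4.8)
The plan is to compute the Cartier divisor $C(D^{-r\cdot \mathfrak q})$ directly from the kernel-ideal recipe of Section \ref{conv}, specialized to $R=k[\epsilon]$, and to recognize the answer over $\mathrm{Spec}(B[\epsilon])$ as a fibrewise norm. Writing $t=t_\mathfrak q$ and using the hypothesis that $I_C=(H(b,\bar b))$ over $\mathrm{Spec}(B\otimes B)$, the condition $a(\bar b)\in\mathrm{Ker}(\phi)$ for $\phi(a)=1\otimes a$ reads $a(\bar b)\in I_{D^{-r\cdot\mathfrak q}}\cdot\o_C$, where $\o_C=(\o_{Y_R}\otimes\o_{Y_R})/I_C$ and $I_{D^{-r\cdot\mathfrak q}}$ is the fractional ideal generated at $\mathfrak q$ by $1-\epsilon t^{-r}$. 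Since the projection $\mathrm{pr}_2\colon C\to\mathrm{Spec}(B)$ onto the second factor is finite over the relevant locus, this kernel is the norm ideal $N_{\o_C/B[\epsilon]}(I_{D^{-r\cdot\mathfrak q}}\o_C)$, whose local generator at a target point $\bar b$ is the product of the source generator $1-\epsilon t(b)^{-r}$ over the points $b$ of the fibre, that is, over the roots of $H(\cdot,\bar b)$.

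First I would use $\epsilon^2=0$ to collapse the product, $\prod_{H(b_i,\bar b)=0}(1-\epsilon\,t(b_i)^{-r})=1-\epsilon\sum_i t(b_i)^{-r}$, so the whole computation reduces to evaluating the symmetric function $\sum_i t(b_i)^{-r}$ of the roots of $H(\cdot,\bar b)$. For this I would apply the residue theorem on the source curve to the meromorphic $1$-form $\omega_{\bar b}=t^{-r}\,\mathrm{d}\log H(\cdot,\bar b)$ with $\bar b$ fixed. Its residues at the simple zeros $b_i$ of $H(\cdot,\bar b)$ are exactly $t(b_i)^{-r}$; its residue at $\mathfrak q$, where $t$ vanishes and $H$ is a unit, is $\mathrm{Res}_\mathfrak q(t^{-r}\,\mathrm{d}\log H)$; and the remaining residues sit at the poles of $H(\cdot,\bar b)$ and at the points of $Y\setminus\mathrm{Spec}(B)$. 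Since the residues sum to zero, this yields $-\sum_i t(b_i)^{-r}=\mathrm{Res}_\mathfrak q(t^{-r}\,\mathrm{d}\log H)+\Sigma_{\mathrm{bd}}$, where $\Sigma_{\mathrm{bd}}$ collects the boundary residues.

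What makes the statement exact, rather than merely correct up to regular functions, is that $\Sigma_{\mathrm{bd}}$ is regular on $\mathrm{Spec}(B)$: because $H\in B\otimes B$ has poles in the source variable only along the fixed boundary $Y\setminus\mathrm{Spec}(B)$, while $t^{-r}$ has its pole at $\mathfrak q\in\mathrm{Spec}(B)$, the boundary residues are regular functions of $\bar b$ on $\mathrm{Spec}(B)$. Hence modulo $\epsilon^2$ one has $1-\epsilon\sum_i t(b_i)^{-r}=(1+\epsilon\,\mathrm{Res}_\mathfrak q(t^{-r}\,\mathrm{d}\log H))(1+\epsilon\,\Sigma_{\mathrm{bd}})$, and the second factor is a unit of $B[\epsilon]$, which by definition does not alter the Cartier divisor. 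This identifies the local equation of $C(D^{-r\cdot\mathfrak q})$ over $\mathrm{Spec}(B[\epsilon])$ with $1+\epsilon\,\mathrm{Res}_\mathfrak q(t^{-r}\,\mathrm{d}\log H)\in 1+\epsilon\,\Sigma$, and the recipe of expanding $H(t_\mathfrak q,\bar b)\in B((t_\mathfrak q))$ to compute the residue is just the definition of $\mathrm{Res}_\mathfrak q$.

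The main obstacle I expect is the rigorous identification in the first step of the kernel ideal with the fibrewise norm of the fractional ideal $I_{D^{-r\cdot\mathfrak q}}$: since $1-\epsilon t^{-r}$ is a unit of the fraction field while its associated ideal in $\o_\mathfrak q[\epsilon]$ is genuinely fractional, one must check compatibility with the linear extension of $C(-)$ to non-effective divisors fixed in Section \ref{conv}, and verify that multiple roots of $H(\cdot,\bar b)$ (weighted by multiplicity in $\mathrm{d}\log H$) and the behaviour at $\infty$ do not disturb the residue bookkeeping. Establishing that $\Sigma_{\mathrm{bd}}$ is a genuine unit of $B[\epsilon]$ is the second delicate point.
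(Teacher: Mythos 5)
Your scheme (kernel ideal as a fibrewise norm, collapse via $\epsilon^2=0$, then a residue theorem) parallels the paper's, but there is a genuine gap, and it originates in your first step. The divisor $D^{-r\cdot\mathfrak q}$ is defined by an \emph{idele}: its entry is $1-\epsilon t_\mathfrak q^{-r}$ at $\mathfrak q$ and $1$ at every other place; it is \emph{not} the principal divisor of the rational function $1-\epsilon t_\mathfrak q^{-r}$ on $Y[\epsilon]$. Hence the local generator of $I_{D^{-r\cdot\mathfrak q}}\cdot \mathcal{O}_C$ at a point $b$ of the fibre of $\mathrm{pr}_2$ is $1-\epsilon t(b)^{-r}$ only when $b$ lies over $\mathfrak q$; at every other fibre point it is $1$. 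Your norm, taken with the generator $1-\epsilon t(b)^{-r}$ at \emph{all} roots of $H(\cdot,\bar b)$, therefore computes $C$ applied to the full divisor of the function $1-\epsilon t_\mathfrak q^{-r}$, which differs from $C(D^{-r\cdot\mathfrak q})$ by contributions at the other zeros of $t_\mathfrak q$ (a local parameter, viewed as a global function, always has such extra zeros unless $Y\simeq \mathbb{P}^1$; it need not even be a global function, since the statement only uses the expansion $H(t_\mathfrak q,\bar b)\in B((t_\mathfrak q))$). The same omission infects your residue count: $t_\mathfrak q^{-r}\mathrm{d}\,\mathrm{log}\,H(\cdot,\bar b)$ also has poles at every other zero $z$ of $t_\mathfrak q$, and the residue there is a function of $\bar b$ with poles along the divisor $C(z)$, hence is \emph{not} regular on $\mathrm{Spec}(B)$ and cannot be absorbed into a unit of $B[\epsilon]$. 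Concretely: take $B=k[x]$, $C$ the diagonal ($H=x-\bar x$), $\mathfrak q=\{x=0\}$, $t_\mathfrak q=x(x-1)$, $r=1$. Your product equals $1-\epsilon/(\bar x(\bar x-1))$, whose divisor on $\mathrm{Spec}(B[\epsilon])$ has a component at $\bar x=1$, while $C(D^{-\mathfrak q})$ is supported at $\bar x=0$ and is given by $1+\epsilon/\bar x=1+\epsilon\,\mathrm{Res}_\mathfrak q$; the ratio $1-\epsilon/(\bar x-1)$ is exactly the residue at the forgotten zero $x=1$ and is not a unit of $B[\epsilon]$. So $\Sigma_{\mathrm{bd}}$, correctly computed, is neither confined to poles of $H$ and boundary points nor regular.

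The paper's proof avoids all of this by never leaving the formal neighbourhood of $\mathfrak q$: it computes the kernel of $B[\epsilon]\to B[\epsilon,t_\mathfrak q]/(t_\mathfrak q^r-\epsilon,\,H(t_\mathfrak q,\bar b))$, uses $t_\mathfrak q^{2r}=0$ in that ring to truncate $H$ to a polynomial $u(t_\mathfrak q)=b_1t_\mathfrak q^{2r-1}+\cdots+b_{2r}\in B[t_\mathfrak q]$, writes the kernel as $\prod_i(\alpha_i^r-\epsilon)$ over the roots of $u$, treats the non-effective twist via $C(D^{-r\cdot\mathfrak q})=C(-r\cdot\mathfrak q)\,C(D_{r\cdot\mathfrak q})$, and then applies the residue theorem \emph{not on $Y$} but for the rational field $\Sigma(t_\mathfrak q)$ with constant field $\Sigma$, i.e. on $\mathbb{P}^1_\Sigma$: there the only poles of $t_\mathfrak q^{-r}\mathrm{d}\,\mathrm{log}\,u$ are $t_\mathfrak q=0$, the roots $\alpha_i$, and $t_\mathfrak q=\infty$, where the residue vanishes because $\mathrm{deg}(u)\leq 2r-1$. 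No geometry of $Y$, no boundary, and no extra zeros of $t_\mathfrak q$ ever enter. To repair your argument you would have to restrict the norm to the branches of $C$ lying over the formal disc at $\mathfrak q$ -- but then the global residue theorem on $Y$ no longer computes the resulting sum, and you are led back to the paper's local computation.
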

\begin{proof} We have that  $C(D_{ r\cdot \mathfrak q})$  over
$\mathrm{Spec}(B)$ is given by the kernel ideal of the $B$-algebra
morphism:
$$\phi:B[\epsilon]\to B[\epsilon, t_\mathfrak q]/
 (t_\mathfrak q^{ r}-\epsilon,H(t_\mathfrak q,\bar b) ), \quad \phi(\epsilon)=\epsilon. $$
Since $t_\mathfrak q^{ 2r}=0$ as an element of $$B[\epsilon,
t_\mathfrak q]/
 (t_\mathfrak q^{ r}-\epsilon,H(t_\mathfrak q,\bar b) ),$$ one can assume  that $H(t_\mathfrak q,\bar b)
 =u(t_\mathfrak q)=b_1t_\mathfrak q^{2r-1}+\cdots+b_{2r-1}t_\mathfrak q +b_{2r}$, with
 $b_i\in B$.

   The ideal  $\mathrm{Ker } ( \phi)$ is generated by  $\prod_i
 (\alpha_i^r-\epsilon)$; this product is taken  over the roots of
 $u(t_\mathfrak q)$.

 Because   $C(D^{ -r\cdot \mathfrak q})=C(-r\cdot \mathfrak q)C(D_{ r\cdot \mathfrak
 q})
  $   and because the Cartier divisor  $   C(-r\cdot \mathfrak q) $  is given over $\mathrm{Spec}(B)$ by the element $  \prod_i
  \alpha_i^{-r } $, we have that the Cartier divisor defined over $\mathrm{Spec}(B[\epsilon])$,    $C(D^{ -r\cdot \mathfrak q}) =   C(t_\mathfrak q^{ -r}) C(t_\mathfrak q^{ r}-\epsilon
  ) $  is given by the  element
$   \prod_i
 (1-\epsilon\alpha_i^{-r})=
  1-\epsilon(\sum_i\alpha_i^{-r}) .$

 By using the residues theorem for $1$-differential forms of $\Sigma(t_\mathfrak
 q)$, where the field of constants is $\Sigma$,
 we conclude  because
 $$ \mathrm{Res}_{ \mathfrak q }(t_\mathfrak q^{-r}\mathrm{d} \,
\mathrm{log}\,H(t_\mathfrak q,\bar b))=-\sum_y  \mathrm{Res}_y(t_{\mathfrak q}^{-r}\mathrm{d} \,
\mathrm{log}\,H(t_\mathfrak q,\bar b))=\sum_i
  \alpha_i^{-r}.$$
 The second sum is taken over the maximal ideals  $y$  of $\Sigma[t_\mathfrak q^{-1}]$.
  \end{proof}

\begin{rem}\label{to} It is not hard to prove that if $a\in B((t_\mathfrak q))$ with either $a\in
B$ or $a\in k((t_\mathfrak q))$ then
 $ \mathrm{Res}_{ \mathfrak q }(t_\mathfrak q^{-r}\mathrm{d} \,
\mathrm{log}\,(aH(t_\mathfrak q,\bar b)))= \mathrm{Res}_{ \mathfrak
q }(t_\mathfrak q^{-r}\mathrm{d} \,
\mathrm{log}\,H(t_\mathfrak q,\bar b))+\alpha,$  for  some $\alpha \in
k$. Here, $aH(t_\mathfrak q,\bar b)\in B((t_\mathfrak q))$.

From   Lemma \ref{re}   and the above Remark, to  calculate
 $C(D^{ -r\cdot \mathfrak q})$  as a Cartier divisor on  $Y_R$,
 one can forget the vertical and horizontal components of $C$.
Thus,  this Cartier divisor is given,  up
to  units of $k[\epsilon] $, by
 $1+\epsilon \cdot  \mathrm{Res}_{ \mathfrak q }(t_\mathfrak q^{-r} \mathrm{d}\,
\mathrm{log}\,H(t_\mathfrak q,\bar b)). $
\end{rem}

\subsection{The principal part  ($\mathrm{mod} \,p$) for   certain meromorphic modular functions of level $4p^m$ with poles over the cusps }

By \cite{DR} VI,  2.3.1,  we have a section
 $s_n:\mathrm{Spec}({\Bbb Z}[\zeta_n])\to {  M}(n) $. The completion of
 ${ M}(n) $ along $s_n$  is identified with
 $\mathrm{Spec}({\Bbb
Z}[\zeta_n][[q^{1/n}]])\to { M}(n).  $ This morphism can be
obtained from the Tate elliptic curve with an $n$-level structure.
The completion along the cusps of  ${  M}(n)$  is a finite
number   of copies of $\mathrm{Spec}({\Bbb Z}[\zeta_n][[q^{1/n} ]])$. From
this, one deduces that the abelian   ramified covering
 $( {\mathcal C}_{A^0_m})_k\to
M(4)_k$  splits completely over the cusps
$M(4)_k\setminus M^0(4)_k$.

By tensoring $s_{4p^m}$ by  $\otimes_{R_{\mathfrak p}} k$,
one
obtains a section
 $  (s_{4p^m})_k:\mathrm{Spec}(k)\to   M (4p^m)_k   $.
We denote by  $\infty$   the geometric point of $
{  M}(4p^m)_k$ given by $( s_{4p^m})_k$ and we denote by $
\overline{q}$ the local parameter $q^{1/4p^m}$ ($\mathrm{mod} \, \mathfrak
p$).

 Recall that    $
M(4p^m)_k=\cup^{l-1}_{i=0} ({\mathcal C}_{A^i_m})_k.$ We denote by  $B$
the ring such that $\mathrm{Spec}(B)={  M}^0(4p^m)$.

Bearing in mind \cite{DR}  V, 4.19 and 4.20, we have an
isomorphism
 $$   \phi:  B_k [p(\sigma)^{-1}] \simeq (H_{A^0_m})_{S_0}\times \cdots \times (H_{A^{l-1}_m})_{S_{l-1}},$$
where  we denote    $\mathrm{Spec}(H_{A^i_m})=({\mathcal C}^0_{A^i_m})_k$ and $  S_0, \cdots, S_{l-1}$  are multiplicative systems  given
by the powers of the image of the polynomial $p(\sigma) $ in
  $H_{A^0_m}, \cdots , H_{A^{l-1}_m}$, respectively.
\begin{thm}\label{prin} For each $m\in {\Bbb N}$, there exists an element of $ B_k [p(\sigma)^{-1}]$
  such that its principal part  is
$$\sum_{i=0}^{d-1}
(\underset { \mathrm{deg}(r(\sigma))=d-1- i}  {\sum_{r(\sigma)\text{(monic)} }}
  \frac{1}{ ({\overline
q^{m p^{i\delta}})^{R^{-1}_{r(\sigma)}}} }) .$$
\end{thm}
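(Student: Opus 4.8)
The plan is to run the trivial correspondence $C:=\deg(\pi)^{-1}(\pi\times\pi)_*(D_k)$ of Remark \ref{tri} against an infinitesimal divisor supported at the cusp $\infty$, and to read off the desired function from its $\epsilon$-linear part. Concretely, I would work on $Y:=({\mathcal C}_{A^0_m})_k$ with its cusp $\infty$ and local parameter $\overline q$, and consider the Cartier divisor $D^{-m\cdot\infty}$ over $Y\otimes k[\epsilon]$ whose associated idele has entry $1-\epsilon\,\overline q^{-m}$ at $\infty$ and $1$ elsewhere, in the sense of the divisors $D^{-r\cdot\mathfrak q}$ introduced before Lemma \ref{re}. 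Its reduction at $\epsilon=0$ is trivial, so by Lemma \ref{ep} its class lies in the infinitesimal part $H^1(Y,\mathcal{O}_Y)$ of $J_Y(k[\epsilon])$.

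Since $C$ is trivial up to vertical and horizontal correspondences (Remark \ref{tri}), the induced endomorphism $\tilde C$ of $J_Y=\mathrm{Pic}^0_{Y/k}$ vanishes; hence $C(D^{-m\cdot\infty})$ is a principal Cartier divisor, equal to the divisor of some $1+\epsilon g$ with $g\in\Sigma_{A^0_m}$. This $g$ is the element I am after, and its cuspidal principal part is precisely the $\epsilon$-coefficient of $C(D^{-m\cdot\infty})$ read in $\mathbb{A}/(\Sigma+O_\Sigma)$. By Remark \ref{to}, in computing this polar part I may discard the vertical and horizontal components of $C$, so that only the graph terms $\Gamma(R_{r(\sigma)})*\Gamma(\mathrm{Fr}^i)$ contribute.

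The heart of the argument is the term-by-term evaluation of these graphs on $D^{-m\cdot\infty}$. For the Frobenius factor I would invoke Remark \ref{mu}: because $\epsilon^{q}=0$, the infinitesimal part can only survive $\mathrm{Fr}^i$ through the transpose-Frobenius behaviour $\mu_1\mapsto\mu_1^{q^i}$, which raises the polar parameter $\overline q^{-m}$ to $\overline q^{-mp^{\delta i}}$; this is exactly the residue of Lemma \ref{re} applied with $H$ the local equation of the corresponding Frobenius graph. The automorphism factor $\Gamma(R_{r(\sigma)})$ acts on divisors as the pushforward $(R_{r(\sigma)})_*$, i.e.\ by $R^{-1}_{r(\sigma)}$ on functions, and since the covering $Y\to M(4)_k$ splits completely over the cusps it carries the polar part at $\infty$ to that at the conjugate cusp. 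Assembling the two effects produces the summand $1/(\overline q^{mp^{\delta i}})^{R^{-1}_{r(\sigma)}}$, and summing over $0\le i\le d-1$ and the monic $r(\sigma)$ of degree $d-1-i$ yields the asserted principal part.

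Finally I would place $g$ inside $B_k[p(\sigma)^{-1}]$: being regular on $Y$ away from the cusps, which lie over the supersingular locus cut out by $p(\sigma)$, the function $g$ belongs to $(H_{A^0_m})_{S_0}$ and, through the isomorphism $\phi$ displayed just before the statement, determines an element of $B_k[p(\sigma)^{-1}]$ with the prescribed principal part. The main obstacle I expect is the bookkeeping of the third step: reconciling the graph $\Gamma(\mathrm{Fr}^i)$ appearing in $C$ with the transpose-Frobenius formula of Remark \ref{mu} forced by $\epsilon^{q}=0$, verifying that the $R_{r(\sigma)}$ permute the cusps over $\infty$ exactly as recorded by the exponent $R^{-1}_{r(\sigma)}$, and confirming via Lemma \ref{re} and Remark \ref{to} that the horizontal and vertical pieces truly contribute nothing to the polar part.
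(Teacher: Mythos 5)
Your skeleton is the same as the paper's: feed the infinitesimal cuspidal divisor $D^{-m\cdot\infty}$ into the trivial correspondence of Remark \ref{tri}, use triviality together with Lemma \ref{ep} to produce a global function $1+\epsilon g$, compute its polar part via Lemma \ref{re} and Remark \ref{to}, and finally land in $B_k[p(\sigma)^{-1}]$ through the isomorphism $\phi$. But there is a genuine gap at the heart of your third step, and you half-acknowledge it yourself. With the paper's conventions, the correspondence you apply, $C=\deg(\pi)^{-1}(\pi\times\pi)_*(D_k)$, contains the factors $\Gamma(\mathrm{Fr}^{i})$, and by Remark \ref{mu} the graph $\Gamma(\mathrm{Fr})$ sends an idele $\mu_0+\mu_1\epsilon$ to $\mu_0+\mu_1\epsilon^{q}=\mu_0$: it annihilates the infinitesimal part outright. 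The infinitesimal part does not ``survive through the transpose-Frobenius behaviour''; a correspondence acts by its own formula, and you cannot substitute the formula for ${}^t\Gamma(\mathrm{Fr})$ when the term actually occurring in $C$ is $\Gamma(\mathrm{Fr}^{i})$. Since $D^{-m\cdot\infty}$ has trivial reduction $\mu_0=1$, every term $\Gamma(R_{r(\sigma)})*\Gamma(\mathrm{Fr}^{i})$ with $i\geq 1$ sends it to the trivial divisor (in either order of composition), so only the $i=0$ terms contribute: the principal part you would obtain has all poles of order $m$, and the higher-order poles $\overline q^{-mp^{i\delta}}$, $i\geq 1$, of the statement never appear.

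The missing idea is to apply the \emph{transpose} correspondence ${}^tD_k$, which is exactly what the paper does. Transposition exchanges vertical and horizontal divisors and carries principal divisors to principal divisors, so ${}^tD_k$ is again trivial up to vertical and horizontal correspondences and your triviality argument goes through unchanged; but now each factor ${}^t\Gamma(\mathrm{Fr}^{i})$ acts, by Remark \ref{mu}, as $\mu_0+\mu_1\epsilon\mapsto \mu_0^{q^{i}}+\mu_1^{q^{i}}\epsilon$, raising the pole $\overline q^{-m}$ to $\overline q^{-mp^{i\delta}}$, while ${}^t\Gamma(\beta_{r(\sigma)})=\Gamma(\beta_{r(\sigma)}^{-1})$ contributes the exponent $R^{-1}_{r(\sigma)}$; assembling these gives precisely the idele $\prod_{i=0}^{d-1}\prod_{r(\sigma)}\bigl(1+\epsilon\,(\overline q^{-mp^{i\delta}})^{R^{-1}_{r(\sigma)}}\bigr)$ of the paper's proof, from which Lemmas \ref{ep} and \ref{re} extract $b_m$. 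One further slip to correct: the cusps do not lie over the supersingular locus. The covering $({\mathcal C}_{A^0_m})_k\to M(4)_k$ splits completely over the cusps and is totally ramified over the zeros of $p(\sigma)$; the localization at powers of $p(\sigma)$ is needed because the element produced by the residue computation of Lemma \ref{re} may acquire poles at the supersingular points, not because of any incidence between the cusps and that locus.
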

\begin{proof} We assume that $\infty \in ({\mathcal C}_{A^j_m})_k$ and we denote by $\Sigma_{A^j_m}$ its fraction field and
by $\mu_m $ the idele whose entries are $1+\epsilon \overline
q^{-m} $ at $ \infty$ and $1$ elsewhere.
 Following the notation of section 5.2,  let $D^{-m\infty}$ be the Cartier divisor associated with  $\mu_m $.
 Now, if we consider the transpose of the trivial correspondence $\mathrm{deg}(\pi)^{-1}(D_k)$ of the Remark \ref{tri} then
  $\mathrm{deg}(\pi)^{-1}(^tD_k)(D^{-m\infty}) $ is the idele on $k[\epsilon]$
  $$  \prod_{i=0}^{d-1}
(\underset { \mathrm{deg}(r(\sigma))=d-1- i}  {\prod_{r(\sigma)\text{(monic)}} }
  \frac{1}{ ({\mu^{m{p^{i\delta}}} )^{R^{-1}_{r(\sigma)}}} })\in 1+\epsilon \frac{\Bbb A}{ O_{\Sigma_{A^j_m}}}.$$

Bearing in mind   Lemmas \ref{ep} and \ref{re},  we obtain an element $b_m\in (H_{A^j_m})_{S_{j}}$ such that
$1+\epsilon b_m\in 1+\epsilon \frac{\Bbb A}{ O_{\Sigma_{A^j_m}}} $ is the above idele. We finish by considering the element  $\phi^{-1}(0,\cdots,b_m,\cdots,0)\in B_k [p(\sigma)^{-1}] $.
\end{proof}

{\bf{Acknowlegments}} The author wishes to thank a referee by the hard work offered in improving the original article.
I also would like to express my gratitude to  Ricardo Perez Marco. I  am very grateful to Ricardo Alonso and  Jesus Mu\~noz-Diaz for their encouragement and help.

\vskip2truecm

\vskip1.5truecm { \'Alvarez V\'azquez, Arturo}\newline {\it
e-mail: } aalvarez@usal.es

\end{document}